\newtheorem{theorem}{Theorem}
\newtheorem{corollary}[theorem]{Corollary}
\newtheorem{example}[theorem]{Example}
\newtheorem{lemma}[theorem]{Lemma}
\newtheorem{notation}[theorem]{Notation}
\newtheorem{proposition}[theorem]{Proposition}
\newtheorem{remark}[theorem]{Remark}
\newenvironment{proof}[1][Proof]{\noindent\textbf{#1.} }{\ \rule{0.5em}{0.5em}}
\begin{document}

\title{Results on the Ratliff-Rush Closure and the Integral Closedness of
Powers of Certain Monomial Curves}
\author{Ibrahim Al-Ayyoub}
\maketitle

\begin{abstract}
Starting from \cite{Ayy2} we compute the Groebner basis for the defining
ideal, $P$, of the monomial curves that correspond to arithmetic sequences,
and then give an elegant description of the generators of powers of the
initial ideal of $P$, $inP$. The first result of this paper introduces a
procedure for generating infinite families of Ratliff-Rush ideals, in
polynomial rings with multivariables, from a Ratliff-Rush ideal in
polynomial rings with two variables. The second result is to prove that all
powers of $inP$ are Ratliff-Rush. The proof is through applying the first
result of this paper combined with Corollary (12) in \cite{Ayy4}. This
generalizes the work of \cite{Ayy1} (or \cite{Ayy11})\ for the case of
arithmetic sequences. Finally, we apply the main result of \cite{Ayy3} to
give the necessary and sufficient conditions for the integral closedness of
any power of $inP$.
\end{abstract}

Keywords: Monomial curves; normal ideals; Ratliff-Rush closure; integral
closure; monomial ideals.

Math Subject Classification: 13P10.

\section{Introduction}

\setstretch{1.15}%
Let $n\geq 2$, $F$ a field and let $x_{0},...,x_{n},t$ be indeterminates.
Let $m_{0},...,m_{n}$ be a sequence of positive integers. Let $P$\ be the
kernel of the $F$-algebra homomorphism $\eta :F[x_{0},...,x_{n}]\rightarrow
F[t]$, defined by $\eta (x_{i})=t^{m_{i}}$. Such an ideal is called a 
\textit{toric ideal }and the variety $V(P)$, the zero set of $P$, is called
an \textit{affine toric variety}. Toric ideals are an interesting kind of
ideals that have been studied by many authors such as \cite{Stu2} and\
Chapter 4 of \cite{Stu1}. The theory of toric varieties plays an important
role at the crossroads of geometry, algebra and combinatorics. The initial
ideals, $inP$, of the monomial curves that correspond to an (almost)
arithmetic sequence have been studied by many authors such as \cite{Ayy1}, 
\cite{PR}, \cite{PS}, \cite{PT}, \cite{PT2}, and \cite{Sen2}. In this paper
we are interested in studying the Ratliff-Rush and the integral closedness
of powers of $inP$ for the case when the sequence $m_{0},...,m_{n}$ is
arithmetic. This study in motivated by results from Al-Ayyoub \cite{Ayy2}, 
\cite{Ayy4}, and \cite{Ayy3}.

\ \ \ 

In Section $\left( \ref{MonoCurves}\right) $ we recall the result of \cite%
{PS} where the generators of $P$ are explicitly constructed for the case
when the sequence $m_{0},...,m_{n}$ is almost arithmetic, that is, some $n-1$
of these form an arithmetic sequence. Then we calculate the values of the
parameters given in \cite{PS} so that we obtain the generators for $P$ for
the case when the sequence $m_{0},...,m_{n}$ is arithmetic, and then we use
Theorem (2.11) of \cite{Ayy2}\ to obtain the Groebner basis for $P$.

\ \ \ \ 

Section $(\ref{RR-Section})$ introduces a procedure (Theorem $(\ref{RR-Main}%
) $) for generating Ratliff-Rush ideals in polynomial rings with arbitrary
number of variables from a Ratliff-Rush ideal in polynomial rings with two
variables. Then we use Theorem $(\ref{RR-Main})$, along with Corollary ($12$%
) of Al-Ayyoub \cite{Ayy4}, to give a generalization of the main result of 
\cite{Ayy1} (or \cite{Ayy11}) for the case of arithmetic sequences. In
particular, we prove that all powers of $inP$ are Ratliff-Rush.

\ 

In Section $(\ref{IntCl-Section})$, motivated by Theorem $(\ref{NormailtyThm}%
)$ of\ \cite{Ayy3}, we give the necessary and sufficient conditions for the
integral closedness of all positive powers of $inP$.

\section{The Defining Ideals of Monomial Curves \label{MonoCurves}\ \ }

Let $m_{0},...,m_{n}$ be an almost arithmetic sequence of positive integers,
that is, some $n-1$ of these form an arithmetic sequence, and assume $\gcd
(m_{0},...,m_{n})=1$. Let $P$\ be the kernel of the $F$-algebra homomorphism 
$\eta :F[x_{0},...,x_{n}]\rightarrow F[t]$, defined by $\eta
(x_{i})=t^{m_{i}}$. A set of generators for the ideal $P$ was explicitly
constructed in \cite{PS}. We call these generators the \textit{%
\textquotedblleft Patil-Singh generators\textquotedblright }. Al-Ayyoub \cite%
{Ayy2} proved that Patil-Singh generators form a Groebner basis for the
prime ideal $P$ with respect to the grevlex monomial order with the grading $%
wt(x_{i})=m_{i}$ with\textit{\ }$x_{0}<x_{1}<\cdots <x_{n}$ (in this case 
\textit{\ }$\prod\limits_{i=0}^{n}x_{i}^{a_{i}}>_{grevlex}\prod%
\limits_{i=0}^{n}x_{i}^{b_{i}}$ if in the ordered tuple $%
(a_{0}-b_{0},a_{1}-b_{1},...,a_{n}-b_{n})$\ the left-most nonzero entry is
negative). In order to state the Groebner basis, we need to introduce some
notations and terminology that \cite{PS} used in their construction of the
generating set for the ideal $P$.

\ \ 

Let $n\geq 2$ be an integer and let $p=n-1$. Let $m_{0},...,m_{p},m_{n}$ be
an almost arithmetic sequence of positive integers and $\gcd
(m_{0},...,m_{n})=1$, $0<m_{0}<\cdots <m_{p}$, and $m_{n}$ arbitrary. Let $%
\Gamma $ denote the numerical semigroup that is minimally generated by $%
m_{0},...,m_{p},m_{n}$, i.e. $\Gamma =\sum\limits_{i=0}^{n}\mathbb{N}_{%
\mathbf{0}}m_{i}$ . Put $\Gamma ^{\prime }=\sum\limits_{i=0}^{p}\mathbb{N}_{%
\mathbf{0}}m_{i}$\ and $\Gamma =\Gamma ^{\prime }+\mathbb{N}_{\mathbf{0}%
}m_{n}$ where $\mathbb{N}_{\mathbf{0}}=\mathbb{N}\cup \{0\}$.

\begin{notation}
\label{q_t}\textit{For }$c,d\in \mathbb{Z}$,\textit{\ let }$[c,d]=\{t\in 
\mathbb{Z}\mid c\leq t\leq d\}$\textit{. For }$t\geq 0$\textit{, let }$%
q_{t}\in \mathbb{Z}$\textit{, }$r_{t}\in \lbrack 1,p]$\textit{\ and }$%
g_{t}\in \Gamma ^{\prime }$\textit{\ \ be defined by }$t=q_{t}p+r_{t}$%
\textit{\ and }$g_{t}=q_{t}m_{p}+m_{r_{t}}$.
\end{notation}

The following lemma gives an explicit description of the set of generators
for the defining ideal.\ \ 

\begin{lemma}
\label{Patil-Singh}\textbf{\cite[\textbf{Lemma 3.1 and 3.2}]{PS}\ }\textit{%
Let }$u=min\{t\geq 0\mid g_{t}-m_{0}\in \Gamma \}$\textit{\ and }$\upsilon
=min\{b\geq 1\mid bm_{n}\in \Gamma ^{\prime }\}$\textit{. Then there exist
unique integers }$w\in \lbrack 0,\upsilon -1]$\textit{, }$z\in \lbrack
0,u-1] $\textit{, }$\lambda \geq 1$\textit{,and }$\mu \geq 0$\textit{, such
that\newline
\ \ (i) }$g_{u}=\lambda m_{0}+wm_{n}$\textit{;\newline
\ \ (ii) }$\upsilon m_{n}=\mu m_{0}+g_{z}$\textit{;\newline
\ \ (iii) }$g_{u-z}+(\upsilon -w)m_{n}=\left\{ 
\begin{tabular}{ll}
$\left( \lambda +\mu +1\right) m_{0}\text{,}$ & if$\text{\ \ }r_{u-z}<r_{u}%
\text{;}$ \\ 
$\left( \lambda +\mu \right) m_{0}\text{,}$ & $\text{if \ }r_{u-z}\geq r_{u}%
\text{.}$%
\end{tabular}%
\right. $
\end{lemma}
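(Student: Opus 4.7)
The plan is to establish existence and uniqueness of the decompositions in (i) and (ii) from the minimality of $u$ and $\upsilon$, and then to deduce the identity (iii) by direct substitution and a short arithmetic case analysis using the canonical form of $g_t$ recorded in Notation \ref{q_t}.

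For existence in (i), since $g_u - m_0 \in \Gamma = \Gamma' + \mathbb{N}_{\mathbf{0}} m_n$ by definition of $u$, I would write $g_u = m_0 + \gamma' + b m_n$ with $\gamma' \in \Gamma'$ and $b \in \mathbb{N}_{\mathbf{0}}$. Reducing $b$ modulo $\upsilon$ via the relation $\upsilon m_n \in \Gamma'$ absorbs any surplus into $\gamma'$ and produces $w \in [0,\upsilon-1]$. The substantive point is that the resulting $\gamma'$ can then be taken to be a pure multiple of $m_0$, giving $g_u = \lambda m_0 + w m_n$ with $\lambda \geq 1$. I would argue this by contradiction using the minimality of $u$: if some $m_i$ with $1 \leq i \leq p$ appeared nontrivially in $\gamma'$, the almost arithmetic relations $m_{i-1}+m_{j+1}=m_i+m_j$ together with the canonical form $g_t = q_t m_p + m_{r_t}$ would permit one to exhibit a $t<u$ with $g_t - m_0 \in \Gamma$. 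Existence in (ii) is dual: $\upsilon m_n \in \Gamma'$, so I would expand it as $\sum_{i=0}^p a_i m_i$ and reduce $\sum_{i\geq 1} a_i m_i$ to the canonical form $g_z$, pushing the surplus into $\mu m_0$; the bound $z\in[0,u-1]$ is then forced by minimality of $u$.

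For uniqueness in (i), if $\lambda m_0 + w m_n = \lambda' m_0 + w' m_n$ with $w,w'\in[0,\upsilon-1]$ and $w'\geq w$, then $(w'-w)m_n = (\lambda-\lambda')m_0 \in \Gamma'$, so the bound $0\leq w'-w<\upsilon$ combined with the minimality of $\upsilon$ forces $w=w'$ and hence $\lambda=\lambda'$. Uniqueness in (ii) is analogous via the minimality of $u$.

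Finally, for (iii), substituting (i) and (ii) into the left-hand side gives
\begin{equation*}
g_{u-z}+(\upsilon-w)m_n = g_{u-z} + g_z - w m_n + \mu m_0,
\end{equation*}
so the assertion reduces to the purely arithmetic claim $g_{u-z}+g_z - g_u \in \{0,m_0\}$, with the two subcases matching $r_{u-z}\geq r_u$ and $r_{u-z}<r_u$. Using $g_t = q_t m_p + m_{r_t}$, $m_i = m_0 + id$, and the Euclidean divisions $u=q_u p+r_u$, $z=q_z p+r_z$, $u-z = q_{u-z} p+r_{u-z}$, I would verify directly the \emph{no-borrow} case ($r_u>r_z$, giving $r_{u-z}=r_u-r_z<r_u$ and $g_{u-z}+g_z-g_u = m_0$) and the \emph{borrow} case ($r_u\leq r_z$, giving $r_{u-z}=p+r_u-r_z\geq r_u$ and $g_{u-z}+g_z-g_u = 0$). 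The main obstacle is the collapsing argument in the existence step, which genuinely uses the almost arithmetic structure of $m_0,\ldots,m_p$; the remaining steps are bookkeeping.
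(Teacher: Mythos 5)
First, a point of comparison: the paper does not prove Lemma \ref{Patil-Singh} at all. It is imported verbatim from Patil--Singh \cite{PS} (their Lemmas 3.1 and 3.2) and used as a black box, so there is no in-paper argument to measure yours against. Judged on its own terms, your reconstruction has the right architecture, and the two tools you isolate are exactly the ones that make everything work: the canonical form $\gamma = a m_{0}+g_{t}$ (with $a,t\geq 0$) for elements of $\Gamma ^{\prime }$, and the subtraction identity $g_{a}-g_{b}=g_{a-b}-m_{0}$ or $g_{a-b}$ according as $r_{a}>r_{b}$ or $r_{a}\leq r_{b}$. With these, your existence argument for (i) closes up correctly (since $t\mapsto g_{t}$ is strictly increasing, the index $s$ in the canonical part of $\gamma ^{\prime }$ satisfies $s<u$, so if $s\geq 1$ the identity produces $g_{u-s}-m_{0}\in \Gamma $ with $0\leq u-s<u$, contradicting minimality of $u$); your uniqueness arguments for (i) and (ii) are correct as stated; and (iii) is indeed pure bookkeeping once (i) and (ii) are in hand.

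The one step whose stated justification does not suffice is the bound $z\in \lbrack 0,u-1]$ in (ii), which you attribute to ``minimality of $u$.'' Minimality of $u$ only says that $g_{t}-m_{0}\notin \Gamma $ for $t<u$; it gives no information about indices $t\geq u$, so it cannot by itself rule out a canonical form $\upsilon m_{n}=\mu m_{0}+g_{z}$ with $z\geq u$. An argument that does work within your framework: choose the canonical form with $z$ minimal and suppose $z\geq u$. Combining (i) with the subtraction identity gives $(\upsilon -w)m_{n}=(\mu +\lambda -\epsilon )m_{0}+g_{z-u}$ with $\epsilon \in \{0,1\}$ and $\mu +\lambda -\epsilon \geq 0$, hence $(\upsilon -w)m_{n}\in \Gamma ^{\prime }$. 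If $w\geq 1$ this contradicts the minimality of $\upsilon $ (since $1\leq \upsilon -w<\upsilon $); if $w=0$ it exhibits a canonical form of $\upsilon m_{n}$ with strictly smaller index $z-u$, contradicting the minimal choice of $z$. So the bound is true and provable with your tools, but it rests on the minimality of $\upsilon $ and of the chosen canonical form, not on the minimality of $u$ --- the latter is what drives the uniqueness in (ii), where your attribution is correct.
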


\ \ \ \ \ \ 

Now the Patil-Singh generators are as follows%
\begin{equation*}
\begin{tabular}{lll}
$\varphi _{i}$ & $=x_{i+r_{u}}x_{p}^{q_{u}}-x_{0}^{\lambda -1}x_{i}x_{n}^{w}$%
, & for $\ 0\leq i\leq p-r_{u}$; \\ 
$\alpha _{i,j}$ & $=x_{i}x_{j}-x_{i-1}x_{j+1}$, & for $\ 1\leq i\leq j\leq
p-1$; \\ 
$\theta $ & $=x_{n}^{\upsilon }-x_{0}^{\mu }x_{r_{z}}x_{p}^{q_{z}}$, &  \\ 
$\psi _{j}$ & $=x_{\varepsilon
p+r_{u}-r_{z}+j}x_{p}^{q_{u}-q_{z}-\varepsilon }x_{n}^{\upsilon
-w}-x_{0}^{\lambda +\mu -\varepsilon }x_{j}$, & for $\ j\in J$.%
\end{tabular}%
\newline
\end{equation*}%
where $\varepsilon =0$ or $1$ according to $r_{u}>r_{z}$ or $r_{u}\leq r_{z}$%
, and $J=\left[ 0\ ,(1-\varepsilon )p+r_{z}-r_{u}\right] $ or $\phi $
according to $z>0$ or $z=0$.

\begin{theorem}
\label{GB}\textbf{\cite[\textbf{Theorem 2.11}]{Ayy2}} The set $\{\varphi
_{i}\mid 0\leq i\leq p-r_{u}\}\cup \{\theta \}$ $\cup $ $\{\alpha _{i,j}\mid
1\leq i\leq j\leq p-1\}$ $\cup $ $\{\psi _{j}\mid 0\ \leq \ j\leq
(1-\varepsilon )p+r_{z}-r_{u}\}$ forms a Groebner basis for the ideal $P$
with respect to the grevlex monomial order\ with $x_{0}<x_{1}<\cdots <x_{n}$
and with the grading $wt(x_{i})=m_{i}$.
\end{theorem}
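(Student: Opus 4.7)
The plan is to verify Buchberger's criterion for the proposed set: since \cite{PS} already proved these binomials generate $P$, it suffices to check that every S-polynomial among them reduces to zero modulo the whole set.

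First I would nail down the leading terms. Every generator is $P$-homogeneous under the grading $wt(x_i)=m_i$, so its two terms agree in weight and the leading term is determined purely by the grevlex tie-breaker (``leftmost entry of $(a_0-b_0,\ldots,a_n-b_n)$ negative''). A direct inspection, using only that $r_u,r_z\in[1,p]$, $\lambda\ge 1$, and $\mu\ge 0$, yields
\[
\operatorname{in}(\varphi_i)=x_{i+r_u}x_p^{q_u},\quad \operatorname{in}(\alpha_{i,j})=x_i x_j,\quad \operatorname{in}(\theta)=x_n^{\upsilon},\quad \operatorname{in}(\psi_j)=x_{\varepsilon p+r_u-r_z+j}\,x_p^{q_u-q_z-\varepsilon}\,x_n^{\upsilon-w},
\]
with the only care needed in degenerate cases ($\lambda=1$, $w=0$, $z=0$, or $q_u=0$), where some exponents collapse but the identification of the leading term remains correct.

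Next I would organize the pairs of generators into the nine type-combinations and kill as many as possible via the coprime-leading-term shortcut: if $\gcd(\operatorname{in}(f),\operatorname{in}(g))=1$ then $S(f,g)$ reduces to zero for free. This disposes at once of every $\theta$-$\alpha$ pair (the variables appearing in $x_n^\upsilon$ and in $x_i x_j$ with $i,j\in[1,p-1]$ are disjoint) and of several sub-cases of $\theta$-$\varphi$ and $\theta$-$\psi$. The $\alpha$-$\alpha$ S-polynomials are the classical $2\times 2$ minors of a catalecticant $\binom{x_0\ x_1\ \cdots\ x_{p-1}}{x_1\ x_2\ \cdots\ x_p}$, whose Buchberger check is a standard short computation.

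The hard part will be the genuinely overlapping pairs $\varphi_i$-$\varphi_j$, $\varphi_i$-$\alpha_{j,k}$, $\varphi_i$-$\psi_j$, $\theta$-$\psi_j$, and $\psi_j$-$\psi_k$. For each one I would compute $S(f,g)$ explicitly and exhibit a reduction consisting of (a) a sequence of $\alpha$-reductions that slides subscripts within $[0,p]$, re-absorbing surplus factors of $x_p$ by way of the identity $t=q_tp+r_t$, followed by (b) a single $\varphi$- or $\psi$-application which trades a block of $x_0$'s against a block of $x_n$'s. The three numerical identities of Lemma \ref{Patil-Singh} are exactly what make these cancellations balance: (i) governs the $\varphi$-interactions, (ii) governs the $\theta$-interactions, and (iii), together with its case split $r_{u-z}<r_u$ versus $r_{u-z}\ge r_u$, governs the $\psi$-interactions and justifies the $\varepsilon\in\{0,1\}$ dichotomy. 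The heaviest bookkeeping will live in the $\psi$-$\psi$ and $\varphi$-$\psi$ cases, where two of these subscript shifts happen at once; carrying them out uniformly is the principal obstacle.
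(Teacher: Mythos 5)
You should first note that the paper does not prove this statement at all: Theorem \ref{GB} is imported verbatim from \cite[Theorem 2.11]{Ayy2}, so there is no in-paper proof to measure your attempt against. Judged on its own terms, your outline identifies the right strategy (the Patil--Singh binomials generate $P$, so one only needs Buchberger's criterion) and your determination of the leading terms is correct, including the degenerate cases $\lambda=1$ and $\lambda+\mu=\varepsilon$, since every generator is homogeneous for $wt(x_i)=m_i$ and the first term of each binomial is free of $x_0$ while the second is not (or else wins the tie-break at the smallest non-common index). But what you have written is a plan, not a proof: the entire mathematical content of the theorem lives in the S-polynomial reductions for the overlapping pairs, and you explicitly defer all of them, calling the $\varphi$--$\psi$ and $\psi$--$\psi$ bookkeeping ``the principal obstacle.'' Until those reductions are exhibited (and in particular until you show how identities (i)--(iii) of Lemma \ref{Patil-Singh} make the exponents of $x_0$ and $x_n$ balance after the $\alpha$-driven subscript shifts), nothing has been proved.

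There are also concrete slips in the case analysis. No $\theta$--$\psi_j$ pair is coprime: $\operatorname{in}(\theta)=x_n^{\upsilon}$ and $\operatorname{in}(\psi_j)$ contains $x_n^{\upsilon-w}$ with $\upsilon-w\ge 1$ because $w\in[0,\upsilon-1]$, so your claim that coprimality disposes of ``several sub-cases of $\theta$--$\psi$'' is false (conversely, \emph{every} $\theta$--$\varphi_i$ pair is coprime, since $i+r_u\le p<n$, not just ``several sub-cases''). More importantly, your list of genuinely overlapping pairs omits $\alpha_{i,j}$--$\psi_k$: when $\varepsilon=1$ and $k<r_z-r_u$ the variable $x_{p-(r_z-r_u)+k}$ in $\operatorname{in}(\psi_k)$ has index in $[1,p-1]$ and can coincide with a variable of $\operatorname{in}(\alpha_{i,j})$, so these S-polynomials must be reduced too. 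A complete write-up has to enumerate all ten pair types, settle each one either by Buchberger's coprimality criterion or by an explicit reduction, and verify that the reductions stay inside the ideal generated by the listed binomials; as it stands the proposal records only the intention to do so.
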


In this paper we are interested in monomial curves that correspond to
arithmetic sequences. In the following lemma and remarks we explicitly state
the values of the parameters in Lemma $\left( \ref{Patil-Singh}\right) $ for
such monomial curves.

\begin{lemma}
\label{ParamValues} Let $\Gamma =\sum\limits_{i=0}^{n}\mathbb{N}_{\mathbf{0}%
}m_{i}$ be the numerical semigroup that is minimally generated by\ the
arithmetic sequence $m_{0},...,m_{n}$ with $m_{0}$ a positive integer, $%
m_{i}=m_{0}+id$, and $\gcd (m_{0},...,m_{n})=1$, where $d$ is a positive
integer with $\gcd (m_{0},d)=1$. Put $\Gamma ^{\prime }=\sum\limits_{i=0}^{p}%
\mathbb{N}_{\mathbf{0}}m_{i}$. Then $n=min\{t\geq 0\mid g_{t}-m_{0}\in
\Gamma \}$ \textit{and }$\left\lceil \frac{m_{0}}{n}\right\rceil =min\{b\geq
1\mid bm_{n}\in \Gamma ^{\prime }\}$.$\ $
\end{lemma}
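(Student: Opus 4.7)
The plan is to prove both equalities via a common core observation: if $r=\sum_{i=0}^{n} a_i m_i$ with $a_i\in\mathbb{N}_{\mathbf{0}}$ is any representation in the semigroup, then, since $m_i=m_0+id$, we have $r=Am_0+Bd$, where $A=\sum a_i$ and $B=\sum i a_i$. Combined with $\gcd(m_0,d)=1$, the pair $(A,B)$ attached to a putative representation is pinned down modulo $m_0$, reducing both claims to an elementary case analysis on an integer parameter $k$.

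For the claim $u=n$, I would first compute $g_t-m_0$ using Notation~\ref{q_t}. For $1\le t\le p=n-1$, we have $q_t=0$ and $r_t=t$, so $g_t-m_0=m_t-m_0=td$. For $t=n=1\cdot(n-1)+1$ we have $q_n=1$ and $r_n=1$, giving $g_n-m_0=m_{n-1}+m_1-m_0=m_n\in\Gamma$, so $u\le n$. To show $u\ge n$, I would prove $td\notin\Gamma$ for $1\le t\le n-1$: writing $td=Am_0+Bd$ and using $\gcd(m_0,d)=1$ yields $A=kd$ and $B=t-km_0$ for some $k\in\mathbb{Z}$. The case $k\le 0$ forces $A\le 0$, hence $A=0$ and $t=0$; the case $k\ge 1$ forces $t\ge m_0$. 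Since minimality of the arithmetic sequence implies $m_0>n$ (otherwise $m_{m_0}=(d+1)m_0$ would be a non-trivial relation), the bound $t\le n-1$ rules out $k\ge 1$.

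For $\upsilon=\lceil m_0/n\rceil=:c$, the same analysis applied to $bm_n=\sum_{i=0}^{n-1}a_i m_i$ gives $A=b+kd$ and $B=bn-km_0$, with indices now running in $\{0,\dots,n-1\}$. For the lower bound $b\ge c$: $k\le 0$ is incompatible with $B\le(n-1)A$ (the maximum weighted sum attainable), since it forces $B\ge bn>(n-1)b\ge(n-1)A$; so $k\ge 1$, whence $bn\ge km_0\ge m_0$ and $b\ge c$. For $cm_n\in\Gamma'$, I take $k=1$, giving $A=c+d$ and $B=cn-m_0$. The feasibility bounds $0\le cn-m_0\le(c+d)(n-1)$ hold (the right inequality reduces to $c\le m_{n-1}$, true since $c\le m_0\le m_{n-1}$), so by an intermediate-value argument on integer partitions one realizes $(A,B)$ by nonnegative $a_0,\dots,a_{n-1}$ concentrated at at most three positions.

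The main bookkeeping obstacle is cleanly extracting $m_0>n$ from minimality of the generating set; once that is in hand, the case analysis on $k$ is routine and symmetric between the two halves. A smaller subtlety in the second part is explicitly exhibiting the partition realizing $(A,B)$, but the interval bound $0\le B\le(n-1)A$ makes this a standard construction.
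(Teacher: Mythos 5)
Your argument is correct, and its first half (showing $u=n$) coincides with the paper's: the paper also notes $g_t-m_0=td$ for $0<t<n$, rules it out of $\Gamma$ via $\gcd(m_0,d)=1$ together with $t<m_0$ (you supply the detail, $A=kd$ and $t=km_0+B$, that the paper leaves implicit, and you also justify $m_0>n$, which the paper merely asserts from minimality), and then exhibits $g_n-m_0=m_n\in\Gamma$. The second half diverges in one place. For the upper bound $\left\lceil m_0/n\right\rceil m_n\in\Gamma'$ the paper writes down the explicit witness $(q+1)m_n=(q+d)m_0+m_{n-r}$, which is exactly the realization your interval condition $0\le B=n-r\le(n-1)A$ guarantees; your version is a touch less concrete but sound. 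For the lower bound $\upsilon\ge\left\lceil m_0/n\right\rceil$ the paper argues quite differently: it supposes $\upsilon=a\le q$, rewrites $am_n=(a-1)m_0+(an-z)d+g_z$, and invokes the \emph{uniqueness} of the decomposition $\upsilon m_n=\mu m_0+g_z$ in Lemma $(\ref{Patil-Singh})$(ii) to force $m_0\mid(an-z)$ with $0<an-z\le qn<m_0$, a contradiction. You instead take an arbitrary representation $bm_n=Am_0+Bd$ in $\Gamma'$ and derive the contradiction from the elementary constraint $0\le B\le(n-1)A$, which forces $k\ge1$ and hence $bn\ge m_0$. Your route is self-contained and does not lean on the imported structural lemma, at the cost of carrying the bookkeeping on $(A,B)$ yourself; the paper's route is shorter but only works because the Patil--Singh uniqueness statement is already available. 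Both are valid proofs of the same statement.
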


\begin{proof}
By the assumption of the minimality on the generators of $\Gamma $ we must
have $n<m_{0}$. Note $g_{0}-m_{0}=-m_{0}\notin \Gamma $. If $0<t<n$, then $%
g_{t}-m_{0}=m_{t}-m_{0}=td\notin $ $\Gamma $ since $t<m_{0}$ and $\gcd
(m_{0},d)=1$. On the other hand, $g_{n}-m_{0}=m_{p}+m_{1}-m_{0}=m_{n}\in
\Gamma $. This proves $n=min\{t\geq 0\mid g_{t}-m_{0}\in \Gamma \}$, that
is, $u=n$ as in Lemma $\left( \ref{Patil-Singh}\right) $.

\ \ 

If we write $m_{0}=qn+r$ with $1\leq r\leq n$, then $\left\lceil \frac{m_{0}%
}{n}\right\rceil =q+1$ and $\left\lceil \frac{m_{0}}{n}\right\rceil
m_{n}=(q+1)m_{0}+(q+1)nd=(q+d)m_{0}+m_{0}+(n-r)d=(q+d)m_{0}+m_{n-r}\in
\Gamma ^{\prime }$. On the other hand, let $0<a\leq q$ be an integer such
that $a=min\{b\geq 1\mid bm_{n}\in \Gamma ^{\prime }\}$. Note that since $%
u=n $, then $z\leq p=n-1$, where $z$ is as given in Lemma $\left( \ref%
{Patil-Singh}\right) $. This implies $q_{z}=0$ and $r_{z}=z$; hence, $%
g_{z}=m_{r_{z}}=m_{0}+zd$. Consider%
\begin{eqnarray*}
am_{n} &=&am_{0}+and \\
&=&(a-1)m_{0}+(an-z)d+m_{0}+zd \\
&=&(a-1)m_{0}+(an-z)d+g_{z}\text{.}
\end{eqnarray*}%
By the uniqueness and part (ii) in Lemma $\left( \ref{Patil-Singh}\right) $,
if $\upsilon =a$, then $m_{0}$ must divide $(an-z)d$. But $\gcd (m_{0},d)=1$%
; thus, $m_{0}$ must divide $an-z\leq qn-z\leq qn<m_{0}$, a contradiction.
\end{proof}

\begin{remark}
\label{Param1}Write $m_{0}=qn+r$ with $1\leq r\leq n$. By Lemma $\left( \ref%
{ParamValues}\right) $ $u=n$ and $\upsilon =q+1$. This implies $q_{u}=1$ and 
$r_{u}=1$, where $q_{u}$ and $r_{u}$ are as defined in Notation $\left( \ref%
{q_t}\right) $. Also, from the proof of Lemma $\left( \ref{ParamValues}%
\right) $, as well as the uniqueness in Lemma $\left( \ref{Patil-Singh}%
\right) $, we have $\lambda =1$, $w=1$, $\mu =q+d$, and $z=n-r$. This
implies $q_{z}=0$ and $r_{z}=n-r$.
\end{remark}

\begin{remark}
\label{Param2}Let $r$ and $n$ be as in the above remark. If $r=n$, then $z=0$%
; thus, $J=\phi $, where $J$ is as defined after Lemma $\left( \ref%
{Patil-Singh}\right) $. In such a case the binomials $\psi _{j}$ do not
exist in the Patil-Singh generators, thus\ we do not have to worry about the
values of $\upsilon -w$, $q_{u}-q_{z}-\varepsilon $, and $\lambda +\mu
-\varepsilon $. Otherwise, if $1\leq r<n$, then $r_{z}=n-r\geq r_{u}$ as $%
r_{u}=1$ by the above remark; thus $\varepsilon =1$, therefore, $\upsilon
-w=q$, $q_{u}-q_{z}-\varepsilon =0$, and $\lambda +\mu -\varepsilon =q+d$.
\end{remark}

\begin{notation}
\label{MonoC-Notation}For the remaining of this paper, we let $p=n-1$ and\
the parameters $q$ and $r$ will have the same meaning assigned to them by
the above remark, that is, $m_{0}=qn+r$ with $1\leq r\leq n$. Note that $%
q\geq 1$ since $m_{0}>n$.
\end{notation}

Now by Theorem $\left( \ref{GB}\right) $, Lemma $\left( \ref{ParamValues}%
\right) $, and the remarks above we may state the following proposition
which is the beginning step towards proving two of the main theorems of this
paper, namely, Theorem $(\ref{RR-Main-MonoCurve})$ and Theorem $(\ref%
{IntCl-MainTheorem})$.

\begin{proposition}
\label{GrobBasis}Let $P$ be the defining ideal of the monomial curve that
corresponds to the arithmetic sequence $m_{0},...,m_{n}$ with $m_{0}$ a
positive integer, $m_{i}=m_{0}+id$, and $\gcd (m_{0},...,m_{n})=1$, where $d$
is a positive integer with $\gcd (m_{0},d)=1$. Then the set $%
\{x_{i}x_{j}-x_{i-1}x_{j+1}\mid 1\leq i\leq j\leq n-1\}\cup
\{x_{r+j}x_{n}^{q}-x_{0}^{q+d}x_{j}\mid 0\leq j\leq n-r\}$ forms a Groebner
basis for $P$ with respect to the grevlex monomial ordering\ with $%
x_{0}<x_{1}<\cdots <x_{n}$ and with the grading $wt(x_{i})=m_{i}$.
\end{proposition}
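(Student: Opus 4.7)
The plan is to specialize the general Groebner basis given by Theorem~$(\ref{GB})$ to the arithmetic case by substituting the parameter values established in Lemma~$(\ref{ParamValues})$ and Remarks~$(\ref{Param1})$ and $(\ref{Param2})$, and then to reindex and merge the resulting binomials into the two families claimed in the statement. No new ideal-theoretic argument is required; the content is essentially bookkeeping plus one or two boundary identifications.

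For the quadratic part, plugging $r_u=1$, $q_u=1$, $\lambda=1$, $w=1$ into $\varphi_i=x_{i+r_u}x_p^{q_u}-x_0^{\lambda-1}x_ix_n^w$ yields $\varphi_i=x_{i+1}x_{n-1}-x_ix_n$ for $0\le i\le p-r_u=n-2$. After the reindexing $a=i+1$, these are precisely the binomials $x_ax_{n-1}-x_{a-1}x_n$ with $1\le a\le n-1$, i.e.\ the $j=n-1$ slice that is missing from the family $\alpha_{i,j}=x_ix_j-x_{i-1}x_{j+1}$ with $1\le i\le j\le p-1=n-2$. Together, therefore, $\{\varphi_i\}\cup\{\alpha_{i,j}\}$ reproduces exactly $\{x_ix_j-x_{i-1}x_{j+1}\mid 1\le i\le j\le n-1\}$.

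For the remaining part, substituting $\upsilon=q+1$, $\mu=q+d$, $q_z=0$, $r_z=n-r$ gives $\theta=x_n^{q+1}-x_0^{q+d}x_{n-r}$. When $1\le r<n$, Remark~$(\ref{Param2})$ supplies $\varepsilon=1$, $\upsilon-w=q$, $q_u-q_z-\varepsilon=0$, and $\lambda+\mu-\varepsilon=q+d$, so the subscript simplifies as $\varepsilon p+r_u-r_z+j=(n-1)+1-(n-r)+j=r+j$, giving $\psi_j=x_{r+j}x_n^q-x_0^{q+d}x_j$ for $0\le j\le n-r-1$. The key observation is that evaluating the same formula at $j=n-r$ recovers $\theta$ (the factor $x_{r+j}x_n^q$ becomes $x_n^{q+1}$), so extending the range to $0\le j\le n-r$ absorbs $\theta$ into one uniformly indexed family. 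In the degenerate case $r=n$, one has $J=\phi$ and the $\psi_j$ disappear, but the same formula at $j=0$ reads $x_n\cdot x_n^q-x_0^{q+d}x_0=x_n^{q+1}-x_0^{q+d+1}$, which is exactly $\theta$ under $x_{n-r}=x_0$. Hence in either case $\{\theta\}\cup\{\psi_j\}$ equals $\{x_{r+j}x_n^q-x_0^{q+d}x_j\mid 0\le j\le n-r\}$. The only step requiring genuine attention is this boundary identification between $\theta$ and the endpoint $\psi_{n-r}$; once it is verified, the proposition follows at once from Theorem~$(\ref{GB})$.
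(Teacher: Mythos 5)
Your proposal is correct and follows essentially the same route as the paper: substitute the parameter values from Lemma~(\ref{ParamValues}) and Remarks~(\ref{Param1})--(\ref{Param2}) into the Patil--Singh generators, observe that the $\varphi_i$ supply exactly the $j=n-1$ slice of the quadratic family while $\theta$ is the $j=n-r$ endpoint of the $\psi_j$ family, and invoke Theorem~(\ref{GB}). Your extra care with the boundary identification and the degenerate case $r=n$ is a slight refinement of, not a departure from, the paper's argument.
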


\begin{proof}
By Remarks $\left( \ref{Param1}\right) $ and $\left( \ref{Param2}\right) $
we have%
\begin{equation*}
\left\{ x_{i+r_{u}}x_{p}^{q_{u}}-x_{0}^{\lambda -1}x_{i}x_{n}^{w}\mid 0\leq
i\leq p-r_{u}\right\} =\left\{ x_{i+1}x_{p}-x_{i}x_{n}\mid 0\leq i\leq
p-1\right\} \text{.}
\end{equation*}%
Thus%
\begin{equation*}
\left\{ \varphi _{i}\mid 0\leq i\leq p-r_{u}\right\} \cup \left\{ \alpha
_{i,j}\mid 1\leq i\leq j\leq p-1\right\} =\left\{
x_{i}x_{j}-x_{i-1}x_{j+1}\mid 1\leq i\leq j\leq p\right\} \text{.}
\end{equation*}%
Also, $\theta =x_{n}^{\upsilon }-x_{0}^{\mu
}x_{r_{z}}x_{p}^{q_{z}}=x_{n}^{q+1}-x_{0}^{q+d}x_{n-r}$, and 
\begin{eqnarray*}
&&\left\{ x_{\varepsilon p+r_{u}-r_{z}+j}x_{p}^{q_{u}-q_{z}-\varepsilon
}x_{n}^{\upsilon -w}-x_{0}^{\lambda +\mu -\varepsilon }x_{j}\mid 0\ \leq \
j\leq (1-\varepsilon )p+r_{z}-r_{u}\right\} \\
&=&\left\{ x_{p+1-(n-r)+j}x_{n}^{q}-x_{0}^{q+d}x_{j}\mid 0\ \leq \ j\leq
r_{z}-1\right\} \\
&=&\left\{ x_{r+j}x_{n}^{q}-x_{0}^{q+d}x_{j}\mid 0\ \leq \ j\leq
n-r-1\right\} \text{,}
\end{eqnarray*}%
thus $\left\{ \psi _{j}\mid \ j\in J\right\} \cup \left\{ \theta \right\}
=\left\{ x_{r+j}x_{n}^{q}-x_{0}^{q+d}x_{j}\mid 0\ \leq \ j\leq n-r\right\} $%
. Thus, Theorem $(\ref{GB})$ finishes the proof.
\end{proof}

\ \ \ \ 

Therefore,%
\begin{equation*}
inP=\left\langle \{x_{i}x_{j}\mid 1\leq i\leq j\leq n-1\}\cup
\{x_{r+j}x_{n}^{q}\mid 0\leq j\leq n-r\}\right\rangle \text{,}
\end{equation*}%
where $inP$ is the initial ideal of $P$.

\subsection{\protect\bigskip The Minimal Set of Generators of $\left(
inP\right) ^{l}$}

In this subsection we give an elegant description of the generators of any
power of $inP$. Let $\lambda _{e}=\left\lceil e\dfrac{q+1}{2}\right\rceil $.
Recall,%
\begin{equation*}
inP=\left\langle \{x_{i}x_{j}\mid 1\leq i\leq j\leq n-1\}\cup
\{x_{t}x_{n}^{q}\mid r\leq t\leq n\}\right\rangle \text{.}
\end{equation*}%
The ideal $(inP)^{l}$ is generated by all monomials in the set%
\begin{equation*}
\Sigma =\left\{ 
\begin{tabular}{ll}
$x_{i_{1}}x_{i_{2}}\cdots x_{i_{2a-1}}x_{i_{2a}}x_{t_{1}}x_{t_{2}}\cdots
x_{t_{b}}x_{n}^{(q+1)\left( l-a\right) -b}\mid $ & $1\leq i_{j}\leq n-1$; \\ 
& $r\leq t_{j}\leq n-1$; \\ 
& $a=0,1,2,...,l$; and \\ 
& $b=0,1,2,\ldots ,l-a$.%
\end{tabular}%
\right\} \text{.}
\end{equation*}%
Assume $a<l$ and $b\geq 2$ and let $\sigma =x_{i_{1}}x_{i_{2}}\cdots
x_{i_{2a-1}}x_{i_{2a}}x_{t_{1}}x_{t_{2}}\cdots x_{t_{b}}x_{n}^{(q+1)\left(
l-a\right) -b}$. Let $i_{2a+1}=t_{1}$ and $i_{2(a+1)}=t_{2}$. Then $\sigma $
equals or it is a multiple of%
\begin{equation*}
x_{i_{1}}x_{i_{2}}\cdots
x_{i_{2a-1}}x_{i_{2a}}x_{i_{2a+1}}x_{i_{2(a+1)}}x_{t_{3}}\cdots
x_{t_{b}}x_{n}^{(q+1)\left( l-a-1\right) -(b-2)}\text{.}
\end{equation*}%
Repeating the same process on pairs of the $t_{i}$, it can be shown that $%
\sigma $ equals or it is a multiple of%
\begin{equation*}
x_{i_{1}}x_{i_{2}}\cdots x_{i_{2(a+b/2)}}x_{n}^{(q+1)\left( l-a-b/2\right)
}=x_{i_{1}}x_{i_{2}}\cdots x_{i_{2(a+b/2)}}x_{n}^{\lambda _{2\left(
l-a-b/2\right) }}\in \Sigma
\end{equation*}%
or%
\begin{eqnarray*}
&&x_{i_{1}}x_{i_{2}}\cdots x_{i_{2(a+(b-1)/2)}}x_{t_{b}}x_{n}^{(q+1)\left(
l-a-(b-1)/2)\right) -1} \\
&=&x_{i_{1}}x_{i_{2}}\cdots x_{i_{2(a+(b-1)/2)}}x_{t_{b}}x_{n}^{\lambda
_{2\left( l-(a+(b-1)/2)\right) }-1}\in \Sigma
\end{eqnarray*}%
according to $b$ is even or odd. This implies that every monomial in $\Sigma 
$ equals or it is a multiple of these two forms of monomials. Therefore, $%
(inP)^{l}$ is \textit{minimally} generated by the monomials of the set%
\begin{equation*}
\begin{tabular}{ll}
$\Omega =$ & $\{x_{i_{1}}x_{i_{2}}\cdots
x_{i_{2e-1}}x_{i_{2e}}x_{n}^{\lambda _{2\left( l-e\right) }}\mid
e=0,1,2,...,l$; $1\leq i_{j}\leq n-1\}$ \\ 
& $\cup \left\{ 
\begin{tabular}{ll}
$x_{i_{1}}x_{i_{2}}\cdots x_{i_{2e-1}}x_{i_{2e}}x_{t}x_{n}^{\lambda
_{2\left( l-e\right) }-1}\mid $ & $e=0,1,2,...,l-1$; \\ 
& $1\leq i_{j}\leq n-1$; $r\leq t\leq n-1$%
\end{tabular}%
\right\} $.%
\end{tabular}%
\end{equation*}%
If $c=2(l-e)$, then $2e=2l-c$. Thus $\Omega $ can be written as%
\begin{equation}
\begin{tabular}{ll}
$\Omega =$ & $\{x_{i_{1}}x_{i_{2}}\cdots x_{i_{2l-c}}x_{n}^{\lambda
_{c}}\mid c=0,2,4,6,...,2l$; $1\leq i_{j}\leq n-1\}$ \\ 
& $\cup $ $\left\{ 
\begin{tabular}{ll}
$x_{i_{1}}x_{i_{2}}\cdots x_{i_{2l-c}}x_{t}x_{n}^{\lambda _{c}-1}\mid $ & $%
c=2,4,6,...,2l$; $1\leq i_{j}\leq n-1$; \\ 
& $r\leq t\leq n-1$.%
\end{tabular}%
\right\} $.%
\end{tabular}
\label{Omega}
\end{equation}

\section{The Ratliff-Rush Closure\label{RR-Section}\ }

Let $R$ be a commutative Noetherian ring with unity and $I$ a regular ideal
in $R$, that is, an ideal that contains a nonzerodivisor. Then the ideals of
the form $I^{n+1}:I^{n}=\{x\in R\mid xI^{n}\subseteq I^{n+1}\}$ give the
ascending chain $I:I^{0}\subseteq I^{2}:I^{1}\subseteq \ldots \subseteq
I^{n}:I^{n+1}\subseteq \ldots $. Let%
\begin{equation*}
\widetilde{I}=\underset{n\geq 1}{\cup }(I^{n+1}:I^{n}).
\end{equation*}%
As $R$ is Noetherian, $\widetilde{I}$ $=I^{n+1}:I^{n}$ for all sufficiently
large $n$. Ratliff and Rush \cite[Theorem 2.1]{RR}\ proved that $\widetilde{I%
}$\ is the unique largest ideal for which $(\widetilde{I})^{n}=I^{n}$ for
sufficiently large $n$. The ideal $\widetilde{I}$\ is called the \textit{%
Ratliff-Rush\ closure} of $I$ and $I$ is called \textit{Ratliff-Rush} if $I=%
\widetilde{I}$.

\ \ \ \ 

As yet, there is no algorithm to compute the Ratliff-Rush closure for
regular ideals in general. To compute $\cup _{n}(I^{n+1}:I^{n})$ one needs
to find a positive integer $N$ such that $\cup _{n}(I^{n+1}:I^{n})$ $%
=I^{N+1}:I^{N}$. However, $I^{n+1}:I^{n}=I^{n+2}:I^{n+1}$\ does not imply
that $I^{n+1}:I^{n}=I^{n+3}:I^{n+2}$\ (\cite{RS}, Example (1.8)). Several
different approaches have been used to decide the Ratliff-Rush closure;
Heinzer et al. \cite{HLS}, Property (1.2), established that every power of a
regular ideal $I$ is Ratliff-Rush if and only if the associated graded ring, 
$gr_{I}(R)=\oplus _{n\geq 0}I^{n}/I^{n+1}$, has a nonzerodivisor (has
positive depth). Thus the Ratliff-Rush property of an ideal is a good tool
for getting information about the depth of the associated graded ring, which
is by itself a topic of interest for many authors such as \cite{HM}, \cite%
{Hun} and \cite{Ghe}. Elias \cite{Elias} established a procedure for
computing the Ratliff-Rush closure of $\mathbf{m}$-primary ideals of a
Cohen-Macaulay local ring with maximal ideal $\mathbf{m}$. Elias' procedure
depends on computing the Hilbert-Poincar\'{e} series of $I$ and then the
multiplicity and the postulation number of $I$. Crispin \cite{Cri}
established an algorithm to compute the Ratliff-Rush closure of monomial
ideals in a polynomial ring with two variables over a field. Generalizing
the whole work of Crispin, Al-Ayyoub \cite{Ayy4} introduced an algorithm for
computing the Ratliff-Rush closure of ideals of the form $I=\langle
x^{a_{n}},y^{b_{0}}\rangle +\left\langle x^{a_{i}}y^{b_{i}}\mid i=1,\ldots
,r\right\rangle \subset F[x,y]$ with $a_{i}<a_{n}$, $b_{i}<b_{0}$, and $%
b_{i}/\left( a_{n}-a_{i}\right) \geq b_{0}/a_{n}$.

\subsection{A Result on the\ Ratliff-Rush Closure}

Computing the Ratliff-Rush closure is proven to be a hard problem in
general. Furthermore, it is still a hard problem to decide whether a given
(monomial) ideal is Ratliff-Rush. Theorem $(\ref{RR-Main})$ below introduces
a procedure for generating Ratliff-Rush ideals in polynomial rings with
arbitrary number of variables from a Ratliff-Rush ideal in polynomial rings
with two variables. The procedure is very helpful as it can be used to
generate families of Ratliff-Rush ideals whose powers are all Ratliff-Rush,
while most of such ideals that are given in the literature are with two
variables. In particular, we apply Theorem $\left( \ref{RR-Main}\right) $,
along with Corollary $(12)$ of Al-Ayyoub \cite{Ayy4}, to conclude that all
powers of the initial ideals of the defining ideals of certain monomial
curves are Ratliff-Rush. This generalizes the work of Al-Ayyoub \cite{Ayy1}
(or \cite{Ayy11}) that shows that such initial ideals are Ratliff-Rush.

\ 

The following notation introduces the objects that constitute the hypotheses
for the main result of this subsection. The example and the figure below
give a visualization of these hypotheses.

\begin{notation}
\label{RR-Notation}Fix nonnegative integers $m$ and $s$ with $s\leq m$. For
any three nonnegative integers $a,b$ and $k\leq a$ define the subset $\Gamma
_{a,b,k}\subset F[x_{1},\ldots ,x_{m},y]$ as follows%
\begin{equation*}
\Gamma _{a,b,k}=\{x_{1}^{t_{1}}\cdots x_{m}^{t_{m}}y^{b}\mid t_{j}\geq 0%
\text{, }\tsum\limits_{j=1}^{m}t_{j}=a\text{ and }\tsum%
\limits_{j=1}^{s}t_{j}\geq k\}\text{.}
\end{equation*}

Let $I=\left\langle x^{a_{i}}y^{b_{i}}\mid i=0,\ldots ,r\right\rangle
\subset F[x,y]$, with $b_{i}<b_{i+1}$ and $a_{i}>a_{i+1}$, be a monomial
ideal and let $K=\{k_{0},k_{1},\ldots ,k_{r}\}$ where the $k_{i}$ are fixed
nonnegative integers. Let $J_{i}\subset F[x_{1},\ldots ,x_{m},y]$ be the
monomial ideal generated by all elements in $\Gamma _{a_{i},b_{i},k_{i}}$,
that is,%
\begin{equation*}
J_{i}=\left\langle \gamma \mid \gamma \in \Gamma
_{a_{i},b_{i},k_{i}}\right\rangle \text{.}
\end{equation*}%
Also, let $J\subset F[x_{1},\ldots ,x_{m},y]$ be the ideal generated by $%
\Gamma _{a_{i},b_{i},k_{i}}$ for all $i$, that is,%
\begin{equation*}
J=J_{0}+J_{1}+\cdots +J_{r}=\left\langle \gamma \mid \gamma \in
\tbigcup\limits_{i=0}^{r}\Gamma _{a_{i},b_{i},k_{i}}\text{ }\right\rangle 
\text{.}
\end{equation*}%
Denote%
\begin{equation*}
J=\Gamma _{I,K}\text{.}
\end{equation*}
\end{notation}

\begin{example}
Let $I=\left\langle m_{i}\mid i=0,\ldots ,r\right\rangle \subset F[x,y]$
where $%
m_{0}=x^{8},m_{1}=x^{7}y^{2},m_{2}=x^{6}y^{4},m_{3}=x^{5}y^{6},m_{4}=x^{4}y^{7},m_{5}=xy^{8},\ 
$and $m_{6}=y^{9}$. Let $m=2$ and $s=1$. Let $k_{0}=3$, $k_{1}=4$, $k_{2}=5$%
, $k_{3}=2$, $k_{4}=0$, $k_{5}=1$, and $k_{6}=0$. Then 
\begin{eqnarray*}
J_{0} &=&\left\langle \gamma \mid \gamma \in \Gamma
_{a_{0},b_{0},k_{0}}\right\rangle =\left\langle \gamma \mid \gamma \in
\Gamma _{8,0,3}\right\rangle \\
&=&\left\langle x_{1}^{t_{1}}x_{2}^{t_{2}}y^{0}\mid t_{j}\geq 0\text{, }%
t_{1}+t_{2}=8\text{ and }t_{1}\geq 3\right\rangle \\
&=&\left\langle x_{1}^{8},x_{1}^{7}x_{2}^{1},x_{1}^{6}x_{2}^{2},\ldots
,x_{1}^{3}x_{2}^{5}\right\rangle \text{,}
\end{eqnarray*}%
and%
\begin{eqnarray*}
J_{1} &=&\left\langle \gamma \mid \gamma \in \Gamma
_{a_{1},b_{1},k_{1}}\right\rangle =\left\langle \gamma \mid \gamma \in
\Gamma _{7,2,4}\right\rangle \\
&=&\left\langle x_{1}^{t_{1}}x_{2}^{t_{2}}y^{2}\mid t_{j}\geq 0\text{, }%
t_{1}+t_{2}=7\text{ and }t_{1}\geq 4\right\rangle \\
&=&\left\langle
x_{1}^{7}y^{2},x_{1}^{6}x_{2}^{1}y^{2},x_{1}^{5}x_{2}^{2}y^{2},x_{1}^{4}x_{2}^{3}y^{2}\right\rangle 
\text{.}
\end{eqnarray*}%
Similarly,%
\begin{eqnarray*}
J_{2} &=&\left\langle x_{1}^{6}y^{4},x_{1}^{5}x_{2}^{1}y^{4}\right\rangle 
\text{, }J_{3}=\left\langle
x_{1}^{5}y^{6},x_{1}^{4}x_{2}^{1}y^{6},x_{1}^{3}x_{2}^{2}y^{6},x_{1}^{2}x_{2}^{3}y^{6}\right\rangle ,
\\
J_{4} &=&\left\langle
x_{1}^{4}y^{7},x_{1}^{3}x_{2}^{1}y^{7},x_{1}^{2}x_{2}^{2}y^{7},x_{1}^{1}x_{2}^{3}y^{7},x_{2}^{4}y^{7}\right\rangle 
\text{, }J_{5}=\left\langle x_{1}y^{8}\right\rangle \text{, and }%
J_{6}=\left\langle y^{9}\right\rangle \text{.}
\end{eqnarray*}%
Figure 1 (a) gives a representation of the ideal $I$ while Figure 1 (b)
gives a representation of the ideal $J$, where the generators of $J$ are
represented by black disks.
\end{example}

\begin{equation*}
\begin{tabular}{ll}
\FRAME{itbpF}{2.3108in}{2.0176in}{0in}{}{}{Figure}{\special{language
"Scientific Word";type "GRAPHIC";maintain-aspect-ratio TRUE;display
"USEDEF";valid_file "T";width 2.3108in;height 2.0176in;depth
0in;original-width 2.271in;original-height 1.9796in;cropleft "0";croptop
"1";cropright "1";cropbottom "0";tempfilename
'L8731W01.wmf';tempfile-properties "XPR";}} & \FRAME{itbpF}{2.2485in}{2.06in%
}{0in}{}{}{Figure}{\special{language "Scientific Word";type
"GRAPHIC";maintain-aspect-ratio TRUE;display "USEDEF";valid_file "T";width
2.2485in;height 2.06in;depth 0in;original-width 2.2087in;original-height
2.0211in;cropleft "0";croptop "1";cropright "1";cropbottom "0";tempfilename
'L8731X02.wmf';tempfile-properties "XPR";}} \\ 
\textbf{Figure 1 (a)} & \textbf{Figure 1 (b)}%
\end{tabular}%
\end{equation*}

\ \ 

The monomials in Figure 1 (b) that are represented by grey disks are not in $%
J$. Those monomials are not contained in the Ratliff-Rush closure of $J$ as
Lemma $(\ref{not-in-RR-Cls})$ shows.

\begin{remark}
\label{RR-Remark}If $l$ is any positive integer, then 
\begin{equation*}
I^{l}=\left\langle x^{a}y^{b}\mid a=\tsum\limits_{i=0}^{r}\alpha _{i}a_{i}%
\text{ and }b=\tsum\limits_{i=0}^{r}\alpha _{i}b_{i}\text{ with }%
\tsum\limits_{i=0}^{r}\alpha _{i}=l\text{ and }\alpha _{i}\geq
0\right\rangle \text{.}
\end{equation*}%
Also, note that $\tprod\limits_{i=0}^{r}\left( \Gamma
_{a_{i},b_{i},k_{i}}\right) ^{\alpha _{i}}=\Gamma _{a,b,k}$ where $%
a=\tsum\limits_{i=0}^{r}\alpha _{i}a_{i}$, $b=\tsum\limits_{i=0}^{r}\alpha
_{i}b_{i}$ and $k=\tsum\limits_{i=0}^{r}\alpha _{i}k_{i}$. Therefore,%
\begin{equation}
J^{l}=\left\langle \gamma \in \Gamma _{a,b,k}\mid x^{a}y^{b}\in I^{l}\text{
where }k=\tsum\limits_{i=0}^{r}\alpha _{i}k_{i}\ \text{and }%
\tsum\limits_{i=0}^{r}\alpha _{i}=l\right\rangle  \label{J^L}
\end{equation}%
where $a=\tsum\limits_{i=0}^{r}\alpha _{i}a_{i}$, $b=\tsum\limits_{i=0}^{r}%
\alpha _{i}b_{i}$. Hence, if $x_{1}^{c_{1}}\cdots x_{m}^{c_{m}}y^{d}\in
J^{l} $, where $\tsum\limits_{i=1}^{m}c_{i}=\tsum\limits_{i=0}^{r}\alpha
_{i}a_{i}$ with $\tsum\limits_{i=0}^{r}\alpha _{i}=l$, then $%
\tsum\limits_{i=1}^{s}c_{i}\geq \tsum\limits_{i=0}^{r}\alpha _{i}k_{i}$.
\end{remark}

\begin{remark}
\label{not-in-J}Let $\delta =x_{1}^{t_{1}}\cdots x_{m}^{t_{m}}y^{b}$. Then $%
\delta \notin J$ if and only if exactly one of the following holds

1) If for some $j$, $\tsum\limits_{i=1}^{m}t_{i}\geq a_{j}$ and $b\geq b_{j}$%
, then $\tsum\limits_{i=1}^{s}t_{i}<k_{j}$.

2) For every $j$, either $\tsum\limits_{i=1}^{m}t_{i}<a_{j}$ or $b<b_{j}$.
\end{remark}

The following lemma shows that the monomials that satisfy the first
condition in the above remark are not in $\widetilde{J}$.

\begin{lemma}
\label{not-in-RR-Cls}Let $\delta =x_{1}^{t_{1}}\cdots x_{m}^{t_{m}}y^{b}$ be
such that whenever for some $j$, $\tsum\limits_{i=1}^{m}t_{i}\geq a_{j}$ and 
$b\geq b_{j}$, then $\tsum\limits_{i=1}^{s}t_{i}<k_{j}$. Then $\delta \notin 
\widetilde{J}$.
\end{lemma}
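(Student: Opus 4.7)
The goal is to show $\delta\notin\widetilde{J}$, which amounts to $\delta\notin J^{n+1}:J^n$ for every $n\geq 1$, so it suffices, for each $n$, to exhibit a single monomial $\mu_n\in J^n$ with $\delta\mu_n\notin J^{n+1}$. My candidate is $\mu_n=\nu_0^n$ with
\[
\nu_0=x_1^{k_0}x_{s+1}^{a_0-k_0}y^{b_0}.
\]
By construction $\nu_0\in\Gamma_{a_0,b_0,k_0}\subseteq J$, so $\mu_n\in J^n$. (The lemma is easily seen to be vacuous when $s=0$ or $s=m$, so one may assume $0<s<m$; the case $k_0\in\{0,a_0\}$ is degenerate but causes no trouble.)

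Next, I would translate $\delta\mu_n\in J^{n+1}$ into the combinatorial criterion supplied by Remark \ref{RR-Remark} (equation (\ref{J^L})): there exist nonnegative integers $\alpha_0,\ldots,\alpha_r$ with $\sum_i\alpha_i=n+1$ satisfying $\sum_i\alpha_ib_i\leq b+nb_0$, $\sum_i\alpha_ia_i\leq\sum_j t_j+na_0$, and $\sum_i\alpha_ik_i\leq\sum_{j\leq s}t_j+nk_0$. Substituting $\alpha_0=(n+1)-\sum_{i\geq 1}\alpha_i$ makes every occurrence of $n$ cancel, leaving the $n$-free finite system
\begin{align*}
\sum_{i\geq 1}\alpha_i(b_i-b_0)&\leq b-b_0,\\
\sum_{i\geq 1}\alpha_i(a_0-a_i)&\geq a_0-\textstyle\sum_{j}t_j,\\
\sum_{i\geq 1}\alpha_i(k_0-k_i)&\geq k_0-\textstyle\sum_{j\leq s}t_j
\end{align*}
in the nonnegative integers $\alpha_1,\ldots,\alpha_r$. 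Infeasibility of this single reduced system, under the hypothesis on $\delta$, is the whole content of the proof: it immediately yields $\delta\mu_n\notin J^{n+1}$ for every $n$ simultaneously.

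I would establish infeasibility by a case analysis on $\delta$. (a) If $b<b_0$ the first inequality is violated outright, since its left-hand side is a nonnegative sum while its right-hand side is negative. (b) If $b\geq b_0$ and $\alpha_i=0$ for every $i\geq 1$, the last two inequalities collapse to $\sum_j t_j\geq a_0$ and $\sum_{j\leq s}t_j\geq k_0$, so $j=0$ is active for $\delta$ and the hypothesis then forces $\sum_{j\leq s}t_j<k_0$, a direct contradiction. (c) The remaining case, $b\geq b_0$ with some $\alpha_i>0$ for $i\geq 1$, is the main obstacle: here one must combine the hypothesis applied to \emph{every} active $j$ with the strict monotonicities $a_0>a_1>\cdots>a_r$ and $b_0<b_1<\cdots<b_r$ from Notation \ref{RR-Notation}, in order to rule out all conic combinations of the direction vectors $v_i=(b_i-b_0,\,a_i-a_0,\,k_i-k_0)$ that could dominate $v_*=(b-b_0,\,\sum_j t_j-a_0,\,\sum_{j\leq s}t_j-k_0)$ coordinatewise. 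I expect this sub-case to be handled by a Farkas-type separating-functional argument producing nonnegative weights $\lambda_1,\lambda_2,\lambda_3$ (typically supported on the $b$- and $k$-coordinates) that witness the infeasibility; synthesizing the pointwise hypothesis ``$K<k_j$ for each active $j$'' into one linear inequality is, in my view, the delicate technical step.
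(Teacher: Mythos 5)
Your reduction of the membership question $\delta\mu_n\in J^{n+1}$ to an $n$-free system is sound, and cases (a) and (b) are fine, but the gap you flag in case (c) is not a delicate step waiting to be filled in --- it is fatal to this choice of witness. The fixed test monomial $\mu_n=\nu_0^{n}$ with $\nu_0=x_1^{k_0}x_{s+1}^{a_0-k_0}y^{b_0}$ does not certify $\delta\notin\widetilde{J}$ in general, because your reduced system can be feasible for a $\delta$ satisfying the hypothesis of the lemma. Concretely, take $r=1$, $m=2$, $s=1$, $I=\langle x^{3},xy\rangle$ (so $a_0=3,b_0=0,a_1=1,b_1=1$) and $K=\{3,1\}$; then $J=\langle x_1^{3},x_1y\rangle$, and $\delta=x_2y^{2}$ satisfies the hypothesis (only $j=1$ is active, and $t_1=0<k_1=1$), yet
\begin{equation*}
\delta\nu_0^{n}=x_1^{3n}x_2y^{2}\ \text{is divisible by}\ (x_1^{3})^{n-1}(x_1y)^{2}=x_1^{3n-1}y^{2}\in J^{n+1}
\end{equation*}
for every $n\geq 1$; in your reduced system this is the feasible point $\alpha_1=2$. (One checks directly that $\delta\notin\widetilde{J}$ here, e.g.\ via the witness $(x_1y)^{l}$, so the lemma is true but your witness misses it.) Hence no Farkas-type argument can close case (c) for the witness $\nu_0^{n}$; the witness must be chosen adaptively in terms of $\delta$.

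The paper's proof does exactly that: for each $l$ it chooses $\gamma=x_1^{c_1}\cdots x_m^{c_m}y^{d}\in J^{l}$ whose $x_1\cdots x_s$-degree is \emph{minimal} among all monomials of $J^{l}$. Minimality upgrades the inequality of Remark (\ref{RR-Remark}) to the equality $\sum_{i=1}^{s}c_i=\sum_{i=0}^{r}\alpha_i k_i$, and then, taking the index $j$ made active by the hypothesis and setting $\beta_j=\alpha_j+1$, $\beta_i=\alpha_i$ otherwise, one gets $\sum_{i=1}^{s}(t_i+c_i)<\sum_{i=0}^{r}\beta_i k_i$ while the degree conditions for $J^{l+1}$ are satisfied, whence $\delta\gamma\notin J^{l+1}$. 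In the example above this minimizer is $(x_1y)^{l}$, not $(x_1^{3})^{l}$. If you want to salvage your framework, replace $\nu_0^{n}$ by a monomial of $J^{n}$ realizing the minimal $x_1\cdots x_s$-degree and build the contradiction at the active index $j$ rather than at index $0$; as written, the proposal does not prove the lemma.
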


\begin{proof}
Let $l$ be any positive integer and choose $\gamma =x_{1}^{c_{1}}\cdots
x_{m}^{c_{m}}y^{d}\in J^{l}$ such that $\tsum\limits_{i=1}^{s}c_{i}$ is
minimal with respect to the total degree of $x_{1}\cdots x_{s}$ among all
monomials in $J^{l}$. As $\gamma \in J^{l}$, then $\tsum%
\limits_{i=1}^{m}c_{i}=\tsum\limits_{i=0}^{r}\alpha _{i}a_{i}$ with $%
\tsum\limits_{i=0}^{r}\alpha _{i}=l$. By the last line of Remark $\left( \ref%
{RR-Remark}\right) $ and by the minimality assumption, we have $%
\tsum\limits_{i=1}^{s}c_{i}=\tsum\limits_{i=0}^{r}\alpha _{i}k_{i}$. Assume
that the hypothesis is satisfied for some $j$. That is, $\tsum%
\limits_{i=1}^{m}t_{i}\geq a_{j}$ and $b\geq b_{j}$, but $%
\tsum\limits_{i=1}^{s}t_{i}<k_{j}$. Let $\beta _{i}=\alpha _{i}$ for $%
i=0,\ldots ,\widehat{j},\ldots ,r$ and $\beta _{j}=\alpha _{j}+1$, then we
get $\tsum\limits_{i=0}^{r}\beta _{i}=l+1$, $\tsum\limits_{i=1}^{m}\left(
t_{i}+c_{i}\right) \geq \tsum\limits_{i=0}^{r}\beta _{i}a_{i}$, and $%
\tsum\limits_{i=1}^{s}\left( t_{i}+c_{i}\right) <\tsum\limits_{i=0}^{r}\beta
_{i}k_{i}$. Thus by $(\ref{J^L})$, and the last line of Remark $\left( \ref%
{RR-Remark}\right) $,\ we get $\delta \gamma =x_{1}^{t_{1}+c_{1}}\cdots
x_{m}^{t_{m}+c_{m}}y^{b+d}\notin J^{l+1}$, that is, $\delta \notin 
\widetilde{J}$.
\end{proof}

\ \ \ \ \ 

Now we state and prove the\ first main theorem of this paper.

\begin{theorem}
\label{RR-Main}Let $I\subset F[x,y]$ and $J\subset F[x_{1},\ldots ,x_{m},y]$
be ideals as defined in Notation $\left( \ref{RR-Notation}\right) $. If $I$
is Ratliff-Rush, then $J=\Gamma _{I,K}$ is Ratliff-Rush.
\end{theorem}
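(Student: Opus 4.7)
The plan is to prove the theorem by contrapositive: I show that if a monomial $\delta = x_{1}^{t_{1}}\cdots x_{m}^{t_{m}} y^{b}$ is not in $J$, then $\delta \notin \widetilde{J}$. By Remark $(\ref{not-in-J})$, such a $\delta$ falls into exactly one of two cases, and Lemma $(\ref{not-in-RR-Cls})$ already handles Case 1. So the new content is Case 2: for every $j$, either $\sum_{i=1}^{m} t_{i} < a_{j}$ or $b < b_{j}$.

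The key observation is that Case 2 is equivalent to saying that the projected monomial $x^{A} y^{b}$, with $A := \sum_{i=1}^{m} t_{i}$, lies outside the two-variable ideal $I$. Since $I$ is Ratliff-Rush, $x^{A} y^{b} \notin I = \widetilde{I}$, so for every integer $n \geq 1$ one can pick a generator $\gamma = \prod_{i=0}^{r} (x^{a_{i}} y^{b_{i}})^{\alpha_{i}} \in I^{n}$, with $\sum \alpha_{i} = n$, such that $x^{A} y^{b} \cdot \gamma \notin I^{n+1}$. By Remark $(\ref{RR-Remark})$, $\prod_{i=0}^{r} \left( \Gamma_{a_{i}, b_{i}, k_{i}} \right)^{\alpha_{i}} = \Gamma_{A', B', K'} \subseteq J^{n}$, where $A' = \sum \alpha_{i} a_{i}$, $B' = \sum \alpha_{i} b_{i}$, and $K' = \sum \alpha_{i} k_{i}$; pick any $\widetilde{\gamma} = x_{1}^{c_{1}} \cdots x_{m}^{c_{m}} y^{B'}$ from this set.

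To finish I claim $\delta \widetilde{\gamma} \notin J^{n+1}$, which forces $\delta \notin J^{n+1} : J^{n}$ for every $n$ and hence $\delta \notin \widetilde{J}$. Suppose for contradiction that $\delta \widetilde{\gamma} \in J^{n+1}$. By $(\ref{J^L})$, some generator of $J^{n+1}$ lying in $\Gamma_{a^{*}, b^{*}, k^{*}}$, with $a^{*} = \sum \beta_{i} a_{i}$, $b^{*} = \sum \beta_{i} b_{i}$, $k^{*} = \sum \beta_{i} k_{i}$ and $\sum \beta_{i} = n+1$, divides $\delta \widetilde{\gamma}$. Reading off the total $x$-degree and the $y$-exponent, divisibility yields $A + A' \geq \sum \beta_{i} a_{i}$ and $b + B' \geq \sum \beta_{i} b_{i}$; these two inequalities alone witness $x^{A+A'} y^{b+B'} = x^{A} y^{b} \cdot \gamma \in I^{n+1}$, contradicting the choice of $\gamma$. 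The main obstacle is precisely justifying this final step cleanly: one must confirm that the $k$-constraint (which involves only the first $s$ of the $x_{i}$'s) decouples from the $x$-total-degree and $y$-exponent constraints, so that the $I$-level Ratliff-Rush failure can always be lifted to a $J^{n+1}$-level failure independently of the choice of the $k_{i}$'s.
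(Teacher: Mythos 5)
Your proposal is correct and is essentially the paper's own argument run in the contrapositive direction: both reduce, via Remark $(\ref{not-in-J})$ and Lemma $(\ref{not-in-RR-Cls})$, to the case where the shadow $x^{\sum t_{i}}y^{b}$ lies outside $I$, and both rest on the same projection fact extracted from $(\ref{J^L})$, namely that membership of $\delta\widetilde{\gamma }$ in $J^{l+1}$ forces membership of $x^{A}y^{b}\gamma $ in $I^{l+1}$. The step you flag as the main obstacle is fine for exactly the reason you give --- the $k_{i}$-constraints are only lower bounds on a sub-sum of the $x$-exponents and impose nothing on the total degree or the $y$-exponent, so they cannot obstruct the downward projection.
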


\begin{proof}
Assume $J$ is not Ratliff-Rush. Let $\delta =x_{1}^{e_{1}}\cdots
x_{m}^{e_{m}}y^{d}\in \widetilde{J}\backslash J$ and $e=\tsum%
\limits_{i=1}^{m}e_{i}$. By Remark $\left( \ref{not-in-J}\right) $ and Lemma 
$\left( \ref{not-in-RR-Cls}\right) $\ we must have that, for every $j$,
either $\tsum\limits_{i=1}^{m}t_{i}<a_{j}$ or $d<b_{j}$. This implies $%
x^{e}y^{d}\notin I$.

\ \ \ \ \ 

As $\delta \in \widetilde{J}$, then $\delta J^{l}\subseteq J^{l+1}$ for some 
$l$. Let $\gamma =x^{a}y^{b}\in I^{l}$ be any minimal generator of $I^{l}$.
Then $a=\tsum\limits_{i=0}^{r}\alpha _{i}a_{i}$ and $b=\tsum%
\limits_{i=0}^{r}\alpha _{i}b_{i}$ with $\tsum\limits_{i=0}^{r}\alpha _{i}=l$%
, where $\alpha _{i}\geq 0$. By $(\ref{J^L})$ we have $x_{1}^{t_{1}}\cdots
x_{m}^{t_{m}}y^{b}\in J^{l}$, where $\tsum\limits_{i=1}^{m}t_{i}=a$ with $%
\tsum\limits_{i=1}^{s}t_{i}\geq \tsum\limits_{i=0}^{r}\alpha _{i}k_{i}$. As $%
\delta J^{l}\subseteq J^{l+1}$, then $\delta x_{1}^{t_{1}}\cdots
x_{m}^{t_{m}}y^{b}\in J^{l+1}$. Without loss of generality, we may assume $%
\delta x_{1}^{t_{1}}\cdots x_{m}^{t_{m}}y^{b}=x_{1}^{t_{1}^{\prime }}\cdots
x_{m}^{t_{m}^{\prime }}y^{b^{\prime }}\in J^{l+1}$ where $t_{i}^{\prime
}=e_{i}+t_{i}$, $b^{\prime }=b+d$, $\tsum\limits_{i=1}^{m}t_{i}^{\prime
}=\tsum\limits_{i=0}^{r}\beta _{i}a_{i}$, $b^{\prime
}=\tsum\limits_{i=0}^{r}\beta _{i}b_{i}$,\ and $\tsum%
\limits_{i=1}^{s}t_{i}^{\prime }\geq \tsum\limits_{i=0}^{r}\beta _{i}k_{i}$
with $\tsum\limits_{i=0}^{r}\beta _{i}=l+1$. If we let $t^{\prime
}=\tsum\limits_{i=1}^{m}t_{i}^{\prime }$, then by $(\ref{J^L})$ we have $%
x^{t^{\prime }}y^{b^{\prime }}\in I^{l+1}$ and $\gamma
x^{e}y^{d}=x^{a}y^{b}x^{e}y^{d}=x^{t^{\prime }}y^{b^{\prime }}\in I^{l+1}$.
Thus $x^{e}y^{d}I^{l}\subseteq I^{l+1}$, that is, $x^{e}y^{d}\in \widetilde{I%
}$. Thus $I$ is not Ratliff-Rush.
\end{proof}

\begin{remark}
\label{T'}Fix nonnegative integers $m$ and $s$ with $s\leq m$. For any three
nonnegative integers $a,b$ and $k\leq a$ define $\Gamma _{a,b,k}^{\prime
}\subset F[x_{1},\ldots ,x_{m},y]$ as%
\begin{equation*}
\Gamma _{a,b,k}^{\prime }=\{x_{1}^{t_{1}}\cdots x_{m}^{t_{m}}y^{b}\mid
t_{j}\geq 0,\tsum\limits_{j=1}^{m}t_{j}=a\text{ and }\tsum%
\limits_{j=m-s+1}^{m}t_{j}\geq k\}\text{.}
\end{equation*}%
Let $I=\left\langle x^{a_{i}}y^{b_{i}}\mid i=0,\ldots ,r\right\rangle
\subset F[x,y]$ be a monomial ideal and $K=\{k_{0},k_{1},\ldots ,k_{r}\}$,
with the $k_{i}$ fixed nonnegative integers. Let $\Gamma _{I,K}^{\prime
}\subset F[x_{1},\ldots ,x_{m},y]$ be the ideal generated by $\Gamma
_{a_{i},b_{i},k_{i}}^{\prime }$ for all $i$, that is,%
\begin{equation*}
\Gamma _{I,K}^{\prime }=\left\langle \gamma \mid \gamma \in
\tbigcup\limits_{i=0}^{r}\Gamma _{a_{i},b_{i},k_{i}}^{\prime }\text{ }%
\right\rangle \text{.}
\end{equation*}%
Then by symmetry on the variables $x_{1},\ldots ,x_{m}$, we have that $%
\Gamma _{I,K}^{\prime }$ is Ratliff-Rush if and only if $\Gamma _{I,K}$ is
Ratliff-Rush.
\end{remark}

\subsection{\ \ \ All Powers of $inP$ are Ratliff-Rush}

Al-Ayyoub \cite{Ayy1} (or \cite{Ayy11}) and \cite{Ayy4}\ proved the
following results.

\begin{theorem}
\label{Ayy1-Thm}\cite[Theorem 1.8]{Ayy1} Let $P\subseteq F[x_{0},...,x_{n}]$
be the ideal that corresponds to the almost arithmetic sequence $%
m_{0},...,m_{n}$ with $m_{0}$ a positive integer. Then $inP$\ is
Ratliff-Rush.
\end{theorem}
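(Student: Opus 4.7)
The plan is to realize $inP$ as a monomial ideal of the form $\Gamma_{I,K}$ in the sense of Notation \ref{RR-Notation}, built from a suitable two-variable monomial ideal $I \subset F[x,y]$, so that Theorem \ref{RR-Main} applies once $I$ is known to be Ratliff-Rush; the Ratliff-Rush property of $I$ will in turn follow from Corollary (12) of \cite{Ayy4}.

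First I would read off the minimal monomial generators of $inP$ from the Groebner basis in Theorem \ref{GB}: the leading terms of the $\varphi_i$'s, the $\alpha_{i,j}$'s, $\theta$, and the $\psi_j$'s. A key observation is that $x_0$ does not appear in any of these leading terms, so $inP$ may be viewed as an ideal in $F[x_1,\ldots,x_n]$. I would then set $y=x_n$, treat $x_1,\ldots,x_{n-1}$ as the $m=n-1$ auxiliary variables of Notation \ref{RR-Notation}, and take the ``special'' block of $s$ variables to be exactly those $x_i$ that can appear in the leading terms of the $\varphi_i$'s and $\psi_j$'s (using Remark \ref{T'} to place this block at the tail if that is more convenient). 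Each family of leading monomials has a common $x$-degree $a_i$, a common $y$-degree $b_i$, and a common lower threshold $k_i$ on the number of special-block variables that appear; the resulting triples $(a_i,b_i,k_i)$ determine a two-variable ideal $I = \langle x^{a_i}y^{b_i}\rangle \subset F[x,y]$ and a vector $K = (k_0,\ldots,k_r)$ for which, by direct comparison of generating sets, $inP = \Gamma_{I,K}$.

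Next I would verify that $I$ satisfies the hypotheses of Corollary (12) of \cite{Ayy4}, that is, after ordering its generators as $\langle x^{a_n}, y^{b_0}\rangle + \langle x^{a_i}y^{b_i}\rangle$, the exponent pairs satisfy $a_i < a_n$, $b_i < b_0$, and the slope condition $b_i/(a_n-a_i) \geq b_0/a_n$. In the purely arithmetic sub-case of Proposition \ref{GrobBasis} this amounts to checking that $I = \langle x^2, xy^q, y^{q+1}\rangle$ with $K=(0,1,0)$ satisfies $q \geq (q+1)/2$, which holds since $q\geq 1$ by Notation \ref{MonoC-Notation}; the almost-arithmetic case is treated by the same kind of calculation after bookkeeping with the Patil--Singh parameters $(u,\upsilon,\lambda,w,z,\mu,\varepsilon,q_u,r_u,q_z,r_z)$. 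Once $I$ is Ratliff-Rush, Theorem \ref{RR-Main} (combined with Remark \ref{T'} when the special block sits at the tail) lifts the Ratliff-Rush property from $I$ to $J = \Gamma_{I,K} = inP$.

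The main obstacle will be the encoding step: correctly repackaging the four families of leading monomials of the Patil--Singh basis into a single $\Gamma_{I,K}$. In the full almost-arithmetic setting the indices and $x_n$-exponents are governed by the somewhat intricate Patil--Singh parameters, and one must check that the constraint on the number of ``special'' variables is uniform enough across all four families for the dictionary $(a_i,b_i,k_i) \mapsto J_i$ to be consistent and to reproduce exactly the minimal generating set of $inP$. The clean arithmetic sub-case already worked out in Proposition \ref{GrobBasis} provides a concrete blueprint for this bookkeeping that I would first verify in detail and then extend.
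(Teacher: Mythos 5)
There is a genuine gap, and it sits exactly where you deferred the work. First, note that the paper does not prove this statement at all: it is imported verbatim from \cite{Ayy1}, and the machinery you invoke (Theorem \ref{RR-Main}, the $\Gamma_{I,K}$ encoding, Corollary 12 of \cite{Ayy4}) is used in this paper only to prove Theorem \ref{RR-Main-MonoCurve}, which deliberately restricts to \emph{arithmetic} sequences. Your arithmetic sub-case is essentially the $l=1$ instance of that proof and is fine. But Theorem \ref{Ayy1-Thm} is about \emph{almost} arithmetic sequences, and the claim that this case follows by ``the same kind of calculation after bookkeeping with the Patil--Singh parameters'' is not true: the encoding $inP=\Gamma_{I,K}$ breaks down there.

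Concretely, two things go wrong once you leave the arithmetic case. (1) The leading terms of the $\varphi_i$ are $x_{i+r_u}x_p^{q_u}$, which have total degree $q_u+1$ in $x_1,\ldots,x_p$, while the leading terms of the $\alpha_{i,j}$ have degree $2$; both families have $x_n$-degree $0$. If $q_u>1$ this gives two distinct exponent pairs $(q_u+1,0)$ and $(2,0)$ with the same $y$-degree, which is incompatible with the required staircase shape $a_i>a_{i+1}$, $b_i<b_{i+1}$ of the two-variable ideal $I$ in Notation \ref{RR-Notation}; only when $q_u=r_u=1$ do the two families merge into the single block $\{x_ix_j\mid 1\leq i\leq j\leq p\}=\Gamma_{2,0,0}$. (2) Even when $q_u=1$ but $r_u>1$, the degree-two leading terms are all $x_ix_j$ with $1\leq i\leq j\leq p-1$ together with $x_jx_p$ for $r_u\leq j\leq p$; the excluded monomials $x_jx_p$ with $j<r_u$ cannot be described by a condition of the form ``the exponent sum over a fixed block of $s$ variables is at least $k$'' (e.g.\ one needs $x_1^2$ and $x_3x_p$ in but $x_1x_p$ out, which forces contradictory membership conditions on the block). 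A similar problem afflicts the $\varphi$-family constraint $t_p\geq q_u$, which is a bound on a single variable's exponent, not on a block sum. So Theorem \ref{RR-Main} simply does not apply to $inP$ for general almost arithmetic sequences, and your proof only establishes the arithmetic special case. The almost arithmetic statement needs a different argument (in \cite{Ayy1} it is proved directly, not via the $\Gamma_{I,K}$ construction).
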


\begin{theorem}
\label{Coro12}\cite[Corollary 12]{Ayy4} Let $I=\langle
x^{a},x^{c}y^{d},y^{b}\rangle \subset F[x,y]$ where $x^{c}y^{d}=0$ or $%
\dfrac{d}{a-c}\geq \dfrac{b}{a}$. Then all powers of $I$ are Ratliff-Rush.
\end{theorem}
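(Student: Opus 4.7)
To prove Theorem \ref{Coro12}, I would split on the two disjuncts of the hypothesis and in each case invoke the Heinzer-Lantz-Shah criterion recalled in Section \ref{RR-Section}: every power of a regular ideal $I$ is Ratliff-Rush if and only if $gr_{I}(F[x,y])$ has positive depth, equivalently if and only if some element of $I$ is a nonzerodivisor on $gr_{I}(F[x,y])$.

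In the degenerate case $x^{c}y^{d}=0$, the ideal $I=\langle x^{a},y^{b}\rangle$ is generated by the regular sequence $x^{a},y^{b}$ in $F[x,y]$, so the classical theorem on associated graded rings of regular sequences gives $gr_{I}(F[x,y])\cong (F[x,y]/I)[T_{1},T_{2}]$. This polynomial ring in two variables over the Artinian ring $F[x,y]/I$ manifestly has positive depth, since $T_{1}$ is a nonzerodivisor on it, and the conclusion follows at once from \cite[Property 1.2]{HLS}.

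In the slope case $d/(a-c)\geq b/a$, I would argue combinatorially and try to show $\widetilde{I^{l}}=I^{l}$ directly for every $l\geq 1$. Each $I^{l}$ is generated as a monomial ideal by $\{x^{ai+cj}y^{dj+bk}\mid i+j+k=l,\,i,j,k\geq 0\}$, and a monomial $u=x^{\alpha}y^{\beta}$ lies in $\widetilde{I^{l}}$ exactly when $uI^{n}\subseteq I^{n+l}$ for some (hence all large) $n$. Specializing this containment to the extreme generators $x^{an}$ and $y^{bn}$ of $I^{n}$, together with suitably chosen mixed generators, yields a family of linear inequalities constraining $(\alpha,\beta)$. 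The slope hypothesis, rewritten $ad+bc\geq ab$, is exactly what ensures the Newton polygon of $I^{l}$ has its vertices at $(al,0)$, $(cl,dl)$, $(0,bl)$, and these inequalities then force the existence of an admissible triple $(i,j,k)$ with $i+j+k=l$, $\alpha\geq ai+cj$, and $\beta\geq dj+bk$, i.e.\ $u\in I^{l}$.

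The main obstacle is the combinatorial bookkeeping in this last step. Intermediate lattice points on the Newton boundary of $I^{l}$ need not correspond to minimal generators, so merely locating $(\alpha,\beta)$ geometrically inside the polytope is not enough. One is forced to exchange factors---trading two copies of $x^{c}y^{d}$ for one each of $x^{a}$ and $y^{b}$, or the reverse---while keeping the total $l$ fixed and both exponents of $u$ dominant, and the delicate point is that such an exchange is always possible in the appropriate direction precisely because $ad\geq b(a-c)$. Carrying out this case analysis on the sizes of $\alpha$ and $\beta$ relative to the corner coordinates $(cl,dl)$ yields $\widetilde{I^{l}}\subseteq I^{l}$; the reverse inclusion is automatic.
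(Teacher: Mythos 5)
First, a point of comparison: the paper does not prove this statement at all --- it is imported verbatim as \cite[Corollary 12]{Ayy4}, where it falls out of the general algorithm developed there (extending Crispin's work) for computing Ratliff-Rush closures of staircase ideals $\langle x^{a_n},y^{b_0}\rangle+\langle x^{a_i}y^{b_i}\mid i\rangle$ subject to slope conditions. So your proposal must stand on its own, and at present it does not. Your degenerate case is correct and complete: $\langle x^{a},y^{b}\rangle$ is generated by a regular sequence, $gr_{I}(F[x,y])\cong (F[x,y]/I)[T_{1},T_{2}]$ has positive depth, and \cite[Property 1.2]{HLS} applies.

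The gap is in the slope case, which is the entire content of the theorem. What you give there is a plan, not an argument: you extract necessary linear inequalities on $(\alpha,\beta)$ by testing $uI^{n}\subseteq I^{n+l}$ against $x^{an}$, $y^{bn}$ and ``suitably chosen mixed generators,'' and then assert that these inequalities ``force the existence of an admissible triple $(i,j,k)$.'' That forcing step \emph{is} the theorem. The two test multiplications produce two different decompositions of $n+l$ (one with the $x^{a}$-exponent large, one with the $y^{b}$-exponent large), and merging them into a single triple with $i+j+k=l$, $\alpha\geq ai+cj$, $\beta\geq dj+bk$ is exactly the exchange argument you explicitly defer (``carrying out this case analysis \dots yields $\widetilde{I^{l}}\subseteq I^{l}$''); until it is written down, nothing is proved, and it is precisely here that the hypothesis $d/(a-c)\geq b/a$ must be used, since the statement fails without it. There is also a factual slip that suggests the geometry is not yet straight: $d/(a-c)\geq b/a$, i.e.\ $ad+bc\geq ab$, says that $(c,d)$ lies \emph{on or above} the segment from $(a,0)$ to $(0,b)$, so $(cl,dl)$ is in general \emph{not} a vertex of the Newton polygon of $I^{l}$ --- the Newton polyhedron of $I$ coincides with that of $\langle x^{a},y^{b}\rangle$. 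The hypothesis does not create a vertex; its role is to control which mixed monomials $x^{ai+cj}y^{dj+bk}$ are redundant, which is what the missing bookkeeping has to exploit.
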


The following theorem is the second main result of this paper where we
generalize the work of \cite{Ayy1} (or \cite{Ayy11}), Theorem $(\ref%
{Ayy1-Thm})$ above, for the case of arithmetic sequences. We work the proof
by applying Theorem $(\ref{RR-Main})$ together with Theorem $(\ref{Coro12})$.

\begin{theorem}[Generalizing Theorem (\protect\ref{Ayy1-Thm})]
\label{RR-Main-MonoCurve}Let $P\subseteq F[x_{0},...,x_{n}]$ be the ideal
that corresponds to the arithmetic sequence $m_{0},...,m_{n}$ with $m_{0}$ a
positive integer. Then $\left( inP\right) ^{l}$\ is Ratliff-Rush for all
positive integers $l$.
\end{theorem}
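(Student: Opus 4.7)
The plan is to combine Theorem $(\ref{RR-Main})$---through its symmetric form stated in Remark $(\ref{T'})$---with Theorem $(\ref{Coro12})$ by realising $(inP)^l$ as a $\Gamma'$-ideal over a two-variable ideal all of whose powers are Ratliff-Rush. I would set $m=n-1$ and $s=n-r$, and identify $y$ with $x_n$ and $x_1,\ldots,x_m$ with $x_1,\ldots,x_{n-1}$; under this identification the condition $\sum_{j=m-s+1}^{m}t_j\geq k$ translates to ``at least $k$ of the $x_i$-factors carry an index in $[r,n-1]$,'' which is precisely the kind of constraint that appears in the description of $\Omega$.

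For the two-variable companion I take $I=\langle x^2,xy^q,y^{q+1}\rangle\subset F[x,y]$, with the middle generator understood as absent in the case $r=n$. Since $q\geq 1$ by Notation $(\ref{MonoC-Notation})$, the inequality $q/(2-1)=q\geq (q+1)/2$ holds, so $I$ satisfies the hypothesis of Theorem $(\ref{Coro12})$ and every power $I^l$ is Ratliff-Rush in $F[x,y]$.

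The crux is to show $(inP)^l=\Gamma'_{I^l,K^{(l)}}$ for a suitable $K^{(l)}$. First I would write the monomial generators of $I^l$ as products $(x^2)^{l-e}(xy^q)^{\alpha_1}(y^{q+1})^{e-\alpha_1}=x^{2l-2e+\alpha_1}y^{e(q+1)-\alpha_1}$ for $0\leq\alpha_1\leq e\leq l$. A short divisibility check---using $q\geq 1$---shows that any $\alpha_1\geq 2$ generator is a multiple of one with $\alpha_1\in\{0,1\}$, so the minimal generators of $I^l$ are exactly $x^{2l-2e}y^{e(q+1)}$ for $0\leq e\leq l$ and $x^{2l-2e+1}y^{e(q+1)-1}$ for $1\leq e\leq l$. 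Since $\lambda_{2e}=e(q+1)$, these two families are in bijection with the two families of minimal generators of $(inP)^l$ displayed in $\Omega$ of equation $(\ref{Omega})$. Tracking the $K$-values through Remark $(\ref{RR-Remark})$ with the base choice $K=\{0,1,0\}$, the decomposition $(\alpha_0,\alpha_1,\alpha_2)=(l-e,0,e)$ produces $k=0$ (matching Part~1 of $\Omega$, which places no constraint on the $x_i$-indices), while $(l-e,1,e-1)$ produces $k=1$ (matching Part~2 of $\Omega$, which requires one index in $[r,n-1]$). This yields the desired identification $(inP)^l=\Gamma'_{I^l,K^{(l)}}$, and Remark $(\ref{T'})$ applied to the Ratliff-Rush ideal $I^l$ immediately gives that $(inP)^l$ is Ratliff-Rush.

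The main obstacle is the bookkeeping in the identification step: establishing that each generator-type in $\Omega$ matches a unique minimal generator of $I^l$ with the correct $k$-value, and confirming that the induced $K^{(l)}$ coming from Remark $(\ref{RR-Remark})$ is indeed the one dictated by $\Omega$ (with appropriate handling of the degenerate case $r=n$, where Part~2 of $\Omega$ is empty and the middle generator of $I$ disappears). Once this matching is nailed down, Theorem $(\ref{Coro12})$ and Remark $(\ref{T'})$ finish the proof without further work.
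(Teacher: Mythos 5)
Your proposal follows essentially the same route as the paper's own proof: the same companion ideal $I=\langle x^{2},xy^{q},y^{q+1}\rangle$ verified against Theorem $(\ref{Coro12})$, the same computation of the $2l+1$ minimal generators of $I^{l}$, the same identification $(inP)^{l}=\Gamma_{I^{l},K}^{\prime }$ with $k_{i}\in \{0,1\}$ according to the parity of $i$, and the same final appeal to Theorem $(\ref{RR-Main})$ and Remark $(\ref{T'})$. The bookkeeping you flag as the main obstacle is exactly what the paper carries out, so nothing further is needed.
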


\begin{proof}
Recall, $inP=\left\langle \{x_{i}x_{j}\mid 1\leq i\leq j\leq n-1\}\cup
\{x_{t}x_{n}^{q}\mid r\leq t\leq n\}\right\rangle $. Let $I=\left\langle
x^{2},xy^{q},y^{q+1}\right\rangle \subset F[x,y]$ and $%
a_{0}=2,b_{0}=0,a_{1}=1,b_{1}=q,a_{2}=0,$ and $b_{2}=q+1$. Then $%
I=\left\langle x^{a_{i}}y^{b_{i}}\mid i=0,1,2\right\rangle $. Let $k_{0}=0$, 
$k_{1}=1$, and $k_{2}=0$. Let $K=\{k_{i}\mid i=0,1,2\}$. Let $x_{n}=y$, $%
m=n-1$ and $s=n-r$ (where $s$ and $m$ are as in Notation $(\ref{RR-Notation}%
) $ and $r$ is as in Notation $(\ref{MonoC-Notation})$). Then 
\begin{equation*}
inP=\left\langle \gamma \mid \gamma \in \tbigcup\limits_{i=0}^{2}\Gamma
_{a_{i},b_{i},k_{i}}^{\prime }\right\rangle =\Gamma _{I,K}^{\prime }
\end{equation*}%
Thus $inP$ is Ratliff-Rush by Theorems $(\ref{RR-Main})$, $(\ref{Coro12})$,
and Remark $\left( \ref{T'}\right) $.

\ \ \ 

Note that if $e$ is even, then $\left( x^{2}\right) ^{(l-e-i)}\left(
xy^{q}\right) ^{e}\left( y^{q+1}\right) ^{i}=\left( x^{2}\right)
^{(l-e/2-i)}y^{qe}\left( y^{q+1}\right) ^{i}$ is a multiple of $\left(
x^{2}\right) ^{(l-e/2-i)}\left( y^{q+1}\right) ^{(e/2+i)}$. Thus 
\begin{eqnarray*}
I^{l} &=&\left\langle \left\{ \left( x^{2}\right) ^{(l-e-i)}\left(
xy^{q}\right) ^{e}\left( y^{q+1}\right) ^{i}\mid 0\leq e\text{ and }0\leq
i\leq l\right\} \right\rangle \\
&=&\left\langle \left\{ \left( x^{2}\right) ^{(l-\theta -i)}\left(
xy^{q}\right) ^{\theta }\left( y^{q+1}\right) ^{i}\mid \theta =0,1\text{ and 
}0\leq i\leq l\right\} \right\rangle \\
&=&\left\langle 
\begin{tabular}{l}
$\left\{
x^{2l},x^{2(l-1)}y^{q+1},x^{2(l-2)}y^{2(q+1)},x^{2(l-3)}y^{3(q+1)},\ldots
,y^{l(q+1)}\right\} $ \\ 
$\cup \left\{ x^{2l-1}y^{q},x^{2l-3}y^{2q+1},x^{2l-5}y^{3q+2},\ldots
,xy^{lq+l-1}\right\} $%
\end{tabular}%
\right\rangle \\
&=&\left\langle \left\{ x^{a_{i}}y^{b_{i}}\mid i=0,\ldots ,2l\right\}
\right\rangle \text{.}
\end{eqnarray*}%
where $a_{i}=2l-i$ and 
\begin{equation*}
b_{i}=\left\{ 
\begin{tabular}{ll}
$i(q+1)/2$, & if $i$ is even$;$ \\ 
$q(i+1)/2+(i-1)/2$, & if $i$ is odd$.$%
\end{tabular}%
\right.
\end{equation*}%
Note that the minimal number of generators of $I^{l}$ is $2l+1$.

\ \ \ \ \ \ \ 

Recall, $\left( inP\right) ^{l}$ is generated by the set$\ $ 
\begin{equation*}
\begin{tabular}{l}
$\{x_{i_{1}}x_{i_{2}}\cdots x_{i_{2l-c}}x_{n}^{(q+1)c/2}\mid c=0,2,4,6,...,2l%
\text{; }1\leq i_{j}\leq n-1\}$ \\ 
$\cup \left\{ 
\begin{tabular}{ll}
$x_{i_{1}}x_{i_{2}}\cdots x_{i_{2l-c}}x_{t}x_{n}^{(q+1)c/2-1}\mid $ & $\text{
}c=2,4,6,...,2l\text{; }1\leq i_{j}\leq n-1\text{;}$ \\ 
& and $r\leq t\leq n-1$%
\end{tabular}%
\right\} .$%
\end{tabular}%
\end{equation*}%
Note that if $i$ is odd and $c=i+1$, then $(q+1)c/2-1=q(i+1)/2+(i-1)/2$.
Hence, $\left( inP\right) ^{l}$ is generated by the set%
\begin{equation*}
\begin{tabular}{l}
$\{x_{i_{1}}x_{i_{2}}\cdots x_{i_{a_{i}}}x_{n}^{b_{i}}\mid i=0,2,4,6,...,2l%
\text{; }1\leq i_{j}\leq n-1\}$ \\ 
$\tbigcup \left\{ 
\begin{tabular}{ll}
$x_{i_{1}}x_{i_{2}}\cdots x_{i_{a_{i}-1}}x_{t}x_{n}^{b_{i}}\mid $ & $\text{ }%
i=1,3,5,...,2l-1\text{; }1\leq i_{j}\leq n-1\text{;}$ \\ 
& and $r\leq t\leq n-1$%
\end{tabular}%
\right\} .$%
\end{tabular}%
\end{equation*}

\ 

Let $m=n-1$ and $s=n-r$ (where $s$ and $m$ are as in Notation $(\ref%
{RR-Notation})$ and $r$ is as in Notation $(\ref{MonoC-Notation})$). Let $%
x_{n}=y$ and $k_{i}=0$ or $1$ according to $i$ is even or $i$ is odd. Let $%
K=\{k_{i}\mid i=0,1,\ldots ,2l\}$. Then%
\begin{eqnarray*}
\left( inP\right) ^{l} &=&\left\langle \gamma \mid \gamma \in
\tbigcup\limits_{i=0}^{2l}\Gamma _{a_{i},b_{i},k_{i}}^{\prime }\right\rangle
\\
&=&\Gamma _{I^{l},K}^{\prime }\text{. }
\end{eqnarray*}%
Now by Theorem (\ref{Coro12}) $I^{l}$ is Ratliff-Rush. Therefore, applying
Theorem (\ref{RR-Main}) and Remark $\left( \ref{T'}\right) $ we conclude
that $\left( inP\right) ^{l}$ is Ratliff-Rush.
\end{proof}

\section{\ The Integral Closure\label{IntCl-Section}}

Let $I$ be an ideal in a Noetherian ring $R$. The integral closure of $I$ is
the ideal $\overline{I}$ that consists of all elements of $R$ that satisfy
an equation of the form%
\begin{equation*}
x^{n}+a_{1}x^{n-1}+\cdots +a_{n-1}x+a_{n}=0,\ \ \ \ \ a_{i}\in I^{i}\text{.}
\end{equation*}%
The ideal $I$ is said to be integrally closed if $I=\overline{I}$. It is
well known that the integral closure of a monomial ideal in a polynomial
ring is again a monomial ideal, see \cite{Swanson-Huneke} or \cite{Vitulli}
for a proof. The problem of finding the integral closure of a monomial ideal 
$I$ reduces to finding monomials $r$, integer $i$ and monomials $%
m_{1},m_{2},\ldots ,m_{i}$ in $I$ such that $r^{i}+m_{1}m_{2}\cdots m_{i}=0$%
, see \cite{Swanson-Huneke}. Geometrically, finding the integral closure of
monomial ideals $I$ in $R=F[x_{0},\ldots ,x_{n}]$ is the same as finding all
the integer lattice points in the convex hull $NP(I)$\ (the Newton
polyhedron of $I$) in $\mathbb{R}^{n}$ of $\Gamma (I)$ (the Newton polytope
of $I$) where $\Gamma (I)$ is the set of all exponent vectors of all the
monomials in $I$. This makes computing the integral closure of monomial
ideals simpler. A power of an integrally closed monomial ideal need not be
integrally closed. For example, if $J$ is the integral closure of $%
I=(x^{4},y^{5},z^{7})\subset F[x,y,z]$, then $J^{2}$ is not integrally
closed, see \cite{Ayy3}. An ideal whose powers are all integrally closed is
called a normal ideal. It is known that if $R$ is a normal integral domain,
then the Rees algebra $R[It]=\oplus _{n\in N}I^{n}t^{n}$ is normal if and
only if $I$ is a normal ideal of $R$ . This brings up the importance of
normality of ideals as the Rees algebra is the algebraic counterpart of
blowing up a scheme along a closed subscheme.

\subsection{Recalling Results on the Integral Closure}

\begin{notation}
Let $\mathbf{\alpha }=(a_{1},\ldots ,a_{n})\in \mathbb{Z}_{\mathbb{+}}$ and
let$\ I(\mathbf{\alpha })$ denote the integral closure of the ideal $%
(x_{1}^{a_{1}},\ldots ,x_{n}^{a_{n}})\subset F[x_{1},\ldots ,x_{n}]$.
\end{notation}

Some authors are interested in studying the integral closedness of powers of
the ideals of the form $I(\mathbf{\alpha })$. Some necessary or sufficient
conditions are given, see \cite{Reid} and \cite{Vitulli}. We use the
following result to reach the goal of this section.

\begin{theorem}
\label{NormailtyThm}\cite[Theorem 8]{Ayy3} Let $a_{i}\in \{s,t\}$ with $s$
and $t$ arbitrary positive integers. Then the ideal $I(a_{1},\ldots
,a_{n})\subset F[x_{1},\ldots ,x_{n}]$\ is normal, that is, all its positive
powers are integrally closed.
\end{theorem}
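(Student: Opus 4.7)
The plan is to exploit the Newton-polyhedron description of integral closure for monomial ideals: since $I(\mathbf{\alpha}) = \overline{J}$ with $J = (x_{1}^{a_{1}},\ldots,x_{n}^{a_{n}})$, the Newton polyhedron of $I(\mathbf{\alpha})$ equals $NP(J) = \{c \in \mathbb{R}_{\geq 0}^{n} : \sum_{i} c_{i}/a_{i} \geq 1\}$, and consequently $NP\bigl(I(\mathbf{\alpha})^{l}\bigr) = l \cdot NP(J)$. A monomial ideal is integrally closed precisely when it contains every monomial whose exponent vector lies in its Newton polyhedron, so showing that $I(\mathbf{\alpha})^{l}$ is integrally closed reduces to exhibiting, for each lattice point $c \in l \cdot NP(J) \cap \mathbb{Z}_{\geq 0}^{n}$, a factorization $x^{c} = x^{c^{(1)}} \cdots x^{c^{(l)}}$ with every $x^{c^{(j)}} \in I(\mathbf{\alpha})$.

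After re-indexing I may assume $a_{1} = \cdots = a_{k} = s$ and $a_{k+1} = \cdots = a_{n} = t$, and for a lattice point $c$ I set $A = c_{1} + \cdots + c_{k}$ and $B = c_{k+1} + \cdots + c_{n}$. In this notation $x^{c} \in I(\mathbf{\alpha})$ is equivalent to $tA + sB \geq st$, while $c \in l \cdot NP(J)$ is equivalent to $tA + sB \geq lst$. Thus the task is to show that every $c$ satisfying the latter can be written as a sum of $l$ lattice points each satisfying the former.

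I would proceed by induction on $l$. The base case $l = 1$ is the definition of $I(\mathbf{\alpha})$. For $l \geq 2$, the hypothesis $tA + sB \geq lst \geq 2st$ forces at least one of $A \geq s$ or $B \geq t$ to hold, since otherwise $tA + sB < ts + st = 2st$. If $A \geq s$, I greedily distribute $s$ units among the first $k$ coordinates of $c$ to obtain $c' \leq c$ componentwise with $A(c') = s$ and $B(c') = 0$; then $x^{c'} \in I(\mathbf{\alpha})$ because $tA(c') + sB(c') = ts \geq st$, and the residue $c - c'$ satisfies $tA(c-c') + sB(c-c') = (tA + sB) - ts \geq (l-1)st$, so the inductive hypothesis yields $x^{c-c'} \in I(\mathbf{\alpha})^{l-1}$, whence $x^{c} \in I(\mathbf{\alpha})^{l}$. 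The case $B \geq t$ is handled symmetrically, extracting $t$ units from the tail block at the same cost $st$.

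The main obstacle is justifying that this extraction is always feasible; the hypothesis $a_{i} \in \{s,t\}$ is essential here. With only two distinct exponent values, any lattice point sufficiently deep inside $l \cdot NP(J)$ must concentrate enough mass in one of the two blocks to yield a lattice point of $NP(J)$ below it. With three or more distinct values of $a_{i}$, each block could fall short of its local threshold while the global weighted sum still meets the bound, and the greedy extraction would fail; this is consistent with the fact that normality can fail for analogous three-value ideals, e.g.\ the example $(x^{4},y^{5},z^{7})$ mentioned earlier in this section.
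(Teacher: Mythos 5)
This statement is quoted from \cite{Ayy3} (Theorem 8 there) and the present paper gives no proof of it, so there is nothing internal to compare against; judging your argument on its own, it is correct and complete. The reduction is standard and you state it accurately: since $\overline{(\overline{J})^{l}}=\overline{J^{l}}$ and the monomials of $\overline{J^{l}}$ are exactly those with exponent vector in $NP(J^{l})=l\cdot NP(J)$, normality of $I(\mathbf{\alpha})=\overline{J}$ amounts to decomposing every lattice point of $l\cdot NP(J)$ into $l$ lattice points of $NP(J)$. Your two-block bookkeeping is right: with $A=\sum_{i\le k}c_{i}$ and $B=\sum_{i>k}c_{i}$, membership in $NP(J)$ is $tA+sB\ge st$ and membership in $l\cdot NP(J)$ is $tA+sB\ge lst$; the dichotomy step is sound because $A\le s-1$ and $B\le t-1$ would give $tA+sB\le 2st-(s+t)<2st$; and extracting $s$ units from the first block (or $t$ from the second) costs exactly $st$, leaving a residue of weight at least $(l-1)st$ to which the induction applies. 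This is exactly where the hypothesis $a_{i}\in\{s,t\}$ enters, and your closing remark correctly identifies why the argument breaks for three distinct exponents, consistent with the $(x^{4},y^{5},z^{7})$ example recalled earlier in this section. The only point I would ask you to make explicit in a final write-up is the identity $\overline{(\overline{J})^{l}}=\overline{J^{l}}$ (from $J^{l}\subseteq(\overline{J})^{l}\subseteq\overline{J^{l}}$ and idempotence of integral closure), since without it the lattice-point criterion applies to $J^{l}$ rather than to $I(\mathbf{\alpha})^{l}$.
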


\begin{corollary}
\label{Gens}\cite{Ayy3} All positive powers of the ideal $I(s,\ldots
,s,t)\subset F[x_{1},\ldots ,x_{n}]$ are integrally closed. In particular,
the $l^{th}$ power of $I(s,\ldots ,s,t)$ equals $I(ls,\ldots ,ls,lt)$, which
is generated by the elements of the set $\{x_{i_{1}}\cdots
x_{i_{ls-a}}x_{n}^{\lambda _{a}}\mid a=0,1,2,\ldots ,ls$; $1\leq i_{1}\leq
i_{2}\leq \cdots \leq i_{ls-a}\leq n-1\}$ where $\lambda _{a}=\left\lceil a%
\dfrac{t}{s}\right\rceil $.
\end{corollary}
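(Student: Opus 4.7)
The plan is to deduce the corollary directly from Theorem \ref{NormailtyThm} combined with an analysis of the Newton polyhedron. First, observe that $I(s,\ldots,s,t)$ is the specialization of $I(a_1,\ldots,a_n)$ to $a_1=\cdots=a_{n-1}=s$ and $a_n=t$, with $a_i\in\{s,t\}$, so Theorem \ref{NormailtyThm} immediately yields that every positive power of $I(s,\ldots,s,t)$ is integrally closed. It remains to identify the $l$-th power with $I(ls,\ldots,ls,lt)$ and to list its minimal generators.

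For the identification, I would work geometrically via Newton polyhedra. For any monomial ideal $J$, the integral closure $\overline{J}$ is the monomial ideal whose exponent vectors fill the Newton polyhedron $NP(J)$, and $NP(J^l)$ equals the Minkowski dilation $l\cdot NP(J)$. Applied to $J=(x_1^s,\ldots,x_{n-1}^s,x_n^t)$, whose Newton polyhedron in the positive orthant is cut out by
\[
\frac{c_1}{s}+\cdots+\frac{c_{n-1}}{s}+\frac{c_n}{t}\geq 1,
\]
scaling by $l$ produces exactly the Newton polyhedron of $(x_1^{ls},\ldots,x_{n-1}^{ls},x_n^{lt})$. Hence $\overline{I(s,\ldots,s,t)^l}=I(ls,\ldots,ls,lt)$, and since the left-hand side already equals $I(s,\ldots,s,t)^l$ by the first paragraph, the two ideals coincide.

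Finally, I would parameterize the minimal generators of $I(ls,\ldots,ls,lt)$. A monomial $x_1^{c_1}\cdots x_n^{c_n}$ lies in this ideal iff $\sum_{i<n}c_i/(ls)+c_n/(lt)\geq 1$. Setting $a=ls-\sum_{i<n}c_i$, the inequality rearranges to $c_n\geq at/s$, whose least integer solution is $\lambda_a=\lceil at/s\rceil$. Since $x_1,\ldots,x_{n-1}$ enter symmetrically (all with exponent bound $ls$), the remaining degree $ls-a$ may be distributed arbitrarily among them, yielding the monomials $x_{i_1}\cdots x_{i_{ls-a}}x_n^{\lambda_a}$ listed in the statement. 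Minimality --- that decreasing $\lambda_a$ by one, or the total $x_1,\ldots,x_{n-1}$-degree by one, leaves the ideal --- is an immediate consequence of the definition of the ceiling. The one technical point worth care is justifying $NP(J^l)=l\cdot NP(J)$ as the characterization of $\overline{J^l}$; this is standard for monomial ideals but provides the bridge between the normality statement of Theorem \ref{NormailtyThm} and the explicit generating set.
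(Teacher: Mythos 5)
Your argument is correct: Theorem \ref{NormailtyThm} specializes to give normality of $I(s,\ldots,s,t)$, the Newton-polyhedron dilation $NP(J^l)=l\cdot NP(J)$ identifies $\overline{J^{l}}$ with $I(ls,\ldots,ls,lt)$, and the rearrangement $c_n\geq at/s$ with $a=ls-\sum_{i<n}c_i$ yields exactly the stated generating set. The paper itself offers no proof --- the corollary is imported verbatim from \cite{Ayy3} --- but your derivation is precisely the natural one given the Newton-polyhedron description of integral closure that the paper recalls at the start of Section \ref{IntCl-Section}, so there is nothing to fault and no genuinely different route to compare.
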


\subsection{\ \ \ Integral Closedness of Powers of $inP$}

Now we state and prove the third main result of this paper.

\begin{theorem}
\label{IntCl-MainTheorem}Let $P\subseteq F[x_{0},...,x_{n}]$ be the ideal
that corresponds to the arithmetic sequence $m_{0},...,m_{n}$ with $m_{0}$ a
positive integer. Let $l$ be any positive integer. Then $\left( inP\right)
^{l}$\ is integrally closed if and only if $q\leq 2$ and $r=1$. In
particular, $inP$ is normal if and only if $q\leq 2$ and $r=1$.
\end{theorem}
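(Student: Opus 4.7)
The plan is to establish the biconditional by identifying $(inP)^l$ with the normal ideal $I(2l,\ldots,2l,l(q+1))$ of Corollary~$(\ref{Gens})$ when $r=1$ and $q\leq 2$, and by exhibiting an explicit witness in $\overline{(inP)^l}\setminus(inP)^l$ otherwise. Both directions hinge on the explicit generating set $\Omega$ of $(inP)^l$ from $(\ref{Omega})$. For sufficiency, if $r=1$ and $q\leq 2$, I compare $\Omega$ term-by-term with the generators of $I(2l,\ldots,2l,l(q+1))$ listed in Corollary~$(\ref{Gens})$. The even-index generators of the Corollary line up with the first family of $\Omega$ via $a=c$. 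With $r=1$, the parameter $t$ in the second family of $\Omega$ ranges over all of $\{1,\ldots,n-1\}$, absorbing one of the $i_j$-slots and producing generators with $2l-c+1$ indices from $\{1,\ldots,n-1\}$; setting $a=c-1$ odd, the $x_n$-exponents agree iff $\lceil(c-1)(q+1)/2\rceil=c(q+1)/2-1$, and a two-line parity split on $q$ confirms this exactly for $q\in\{1,2\}$. Hence $(inP)^l=I(2l,\ldots,2l,l(q+1))$, which is integrally closed by Corollary~$(\ref{Gens})$.

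For necessity, I use the witness
\[
\omega=x_1^{2l-1}x_n^{e},\quad e=\begin{cases}q,&\text{if }r\geq 2,\\ \lceil(q+1)/2\rceil,&\text{if }r=1\text{ and }q\geq 3.\end{cases}
\]
The containment $\omega\in\overline{(inP)^l}$ follows by placing the exponent vector $(2l-1,0,\ldots,0,e)$ in the Newton polyhedron of $(inP)^l$ via the convex combination $\tfrac{2l-1}{2l}(2l,0,\ldots,0)+\tfrac{1}{2l}(0,\ldots,0,l(q+1))$ of the exponents of $x_1^{2l},x_n^{l(q+1)}\in(inP)^l$, using $e\geq(q+1)/2$. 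To see $\omega\notin(inP)^l$, I inspect $\Omega$: since $\omega$ contains no $x_i$ for $2\leq i\leq n-1$, any candidate divisor must set every $x_{i_j}$-index, and also $t$ in the second family, equal to $1$; in particular, any second-family divisor requires $r=1$. A first-family divisor then reduces to $x_1^{2l-c}x_n^{c(q+1)/2}$, and the bound $c(q+1)/2\leq e<q+1$ forces $c=0$, giving $x_1^{2l}\nmid\omega$. A second-family divisor forces $c=2$ (since $c=4$ already overshoots $x_n^e$), namely $x_1^{2l-1}x_n^q$, which divides $\omega$ iff $e\geq q$. For $r\geq 2$ the second family is ruled out at the $t=1$ step, and for $r=1, q\geq 3$ it is ruled out at the $e\geq q$ step.

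The main obstacle is the case exhaustion in the non-containment step: the threshold $e=\lceil(q+1)/2\rceil$ is exactly what defeats the $c=2$ second-family candidate when $q\geq 3$, and the constraint $t\geq r$ is what blocks the second family entirely when $r\geq 2$. Once $\omega$ is correctly identified, the Newton polyhedron containment and the term-by-term matching for sufficiency are routine.
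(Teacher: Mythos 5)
Your proof is correct and follows essentially the same route as the paper: both directions reduce to comparing the minimal generating set $\Omega$ of $(inP)^{l}$ with the generating set of $\overline{(inP)^{l}}=I(2l,\ldots ,2l,l(q+1))$ supplied by Corollary $(\ref{Gens})$, with the same computation $\left\lceil (c-1)\frac{q+1}{2}\right\rceil =\frac{c}{2}(q+1)-1\Longleftrightarrow q\leq 2$ driving sufficiency. The only (cosmetic) difference is in the necessity step, where the paper uses the single witness $x_{j}^{2l-1}x_{n}^{\left\lceil (q+1)/2\right\rceil }\in H$ with $j<r$ chosen when $r>1$, while you fix $j=1$, adjust the $x_{n}$-exponent by case, and certify membership in the integral closure by a direct Newton-polyhedron computation.
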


\begin{corollary}
Let $inP$ be as in the above theorem. Let $l$ be any positive integer. Then $%
\left( inP\right) ^{l}$\ is integrally closed if and only if $n=m_{0}-1$ or $%
2n=m_{0}-1$. In particular, if both $n$ and $m_{0}$ are even integers, then $%
\left( inP\right) ^{l}$\ is not integrally closed for any $l$.
\end{corollary}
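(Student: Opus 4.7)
The plan is to invoke Theorem \ref{IntCl-MainTheorem} and simply unpack its criterion ``$q \leq 2$ and $r = 1$'' into explicit relations between $n$ and $m_0$. Recall from Notation \ref{MonoC-Notation} that $q$ and $r$ are the unique integers with $m_0 = qn + r$, $q \geq 1$, and $1 \leq r \leq n$; the condition $q \geq 1$ comes from $m_0 > n$.

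First I would handle the equivalence. Imposing $r = 1$, the value $q = 1$ gives $m_0 = n + 1$, i.e.\ $n = m_0 - 1$, while $q = 2$ gives $m_0 = 2n + 1$, i.e.\ $2n = m_0 - 1$. Conversely, each of these two equations determines $(q,r)$ uniquely under the constraints $q \geq 1$ and $1 \leq r \leq n$, so each forces $r = 1$ together with $q \in \{1,2\}$. Combined with Theorem \ref{IntCl-MainTheorem}, this yields the stated equivalence.

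For the ``in particular'' clause, I would argue by parity. If both $n$ and $m_0$ are even, then $m_0 - 1$ is odd while both $n$ and $2n$ are even, so neither $n = m_0 - 1$ nor $2n = m_0 - 1$ can hold. By the equivalence just proved, $(inP)^l$ fails to be integrally closed for every positive integer $l$.

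Because the corollary is essentially a bookkeeping consequence of Theorem \ref{IntCl-MainTheorem}, there is no substantive obstacle; the only point to be careful about is that Notation \ref{MonoC-Notation} uses the convention $1 \leq r \leq n$ rather than the more common $0 \leq r < n$, but this is exactly what makes the two admissible cases $(q,r) = (1,1)$ and $(q,r) = (2,1)$ correspond cleanly to the two equalities $n = m_0 - 1$ and $2n = m_0 - 1$.
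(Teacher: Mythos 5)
Your proof is correct and follows exactly the route the paper intends: the corollary is left unproved there as an immediate consequence of Theorem \ref{IntCl-MainTheorem}, obtained by unpacking $q\leq 2$, $r=1$ through the decomposition $m_{0}=qn+r$ with $1\leq r\leq n$ from Notation \ref{MonoC-Notation}, plus the obvious parity observation. Your attention to the convention $1\leq r\leq n$ and to the uniqueness of $(q,r)$ under that convention is exactly the right care to take.
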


\begin{example}
Let $P\subseteq F[x_{0},\ldots ,x_{7}]$ be\ the defining ideal that
corresponds to some arithmetic sequence $m_{0},\ldots ,m_{7}$. Then $\left(
inP\right) ^{l}$\ is integrally closed if and only if $m_{0}\in \{8,15\}$.
\end{example}

\begin{lemma}
\label{I_k=I_ka}Let $\mathbf{\alpha }=(2,\ldots ,2,q+1)\in \mathbb{Z}%
_{+}^{n} $. Then $\overline{(inP)^{l}}=I(l\mathbf{\alpha })$ for any
positive integer $l$.
\end{lemma}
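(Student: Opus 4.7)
The plan is to sandwich $inP$ between two monomial ideals whose integral closures I can already compute, and then pass to $l$th powers and integral closures. Concretely, I claim
\[
\bigl(x_1^2,\ldots,x_{n-1}^2,x_n^{q+1}\bigr)\;\subseteq\;inP\;\subseteq\;I(\mathbf{\alpha}).
\]
Theorem $(\ref{NormailtyThm})$ and Corollary $(\ref{Gens})$, applied with $s=2$ and $t=q+1$, tell me that $I(\mathbf{\alpha})$ is normal and $I(\mathbf{\alpha})^l=I(l\mathbf{\alpha})$, so in particular $I(l\mathbf{\alpha})$ is integrally closed. Raising the sandwich to the $l$th power and taking integral closures would yield
\[
\overline{(x_1^2,\ldots,x_n^{q+1})^l}\;\subseteq\;\overline{(inP)^l}\;\subseteq\;\overline{I(\mathbf{\alpha})^l}\;=\;I(l\mathbf{\alpha}).
\]
The leftmost ideal contains $(x_1^{2l},\ldots,x_{n-1}^{2l},x_n^{l(q+1)})$ (raise each base generator to the $l$th power), so its integral closure contains $I(l\mathbf{\alpha})=\overline{(x_1^{2l},\ldots,x_n^{l(q+1)})}$; this forces equalities throughout the chain and, in particular, $\overline{(inP)^l}=I(l\mathbf{\alpha})$.

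Two pointwise containments remain. The first, $(x_1^2,\ldots,x_{n-1}^2,x_n^{q+1})\subseteq inP$, is immediate from Proposition $(\ref{GrobBasis})$: each $x_i^2$ is the diagonal instance of $x_ix_j$ with $i=j\in[1,n-1]$, and $x_n^{q+1}=x_nx_n^q$ belongs to the family $x_tx_n^q$ with $r\le t\le n$. The second, $inP\subseteq I(\mathbf{\alpha})$, I would verify via the Newton polyhedron criterion: a monomial $x_1^{c_1}\cdots x_n^{c_n}$ lies in $I(\mathbf{\alpha})=\overline{(x_1^2,\ldots,x_{n-1}^2,x_n^{q+1})}$ iff $\tfrac{c_1+\cdots+c_{n-1}}{2}+\tfrac{c_n}{q+1}\ge 1$. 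Each generator $x_ix_j$ with $1\le i\le j\le n-1$ and the generator $x_n^{q+1}$ meets this with equality, while for $x_tx_n^q$ with $r\le t\le n-1$ the left side equals $\tfrac12+\tfrac{q}{q+1}=\tfrac{3q+1}{2(q+1)}$, which is $\ge 1$ because $q\ge 1$ by Notation $(\ref{MonoC-Notation})$.

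The two inclusions above, fed into the squeeze of the first paragraph, give the desired equality $\overline{(inP)^l}=I(l\mathbf{\alpha})$. The argument is essentially bookkeeping once the Newton polyhedron criterion is in hand, so I anticipate no real obstacle. The only spot where the structure of arithmetic sequences is actually used is the inequality $\tfrac{3q+1}{2(q+1)}\ge 1$, which is exactly what makes the "cross" generators $x_tx_n^q$ of $inP$ integral over $(x_1^2,\ldots,x_{n-1}^2,x_n^{q+1})$; this in turn rests on $q\ge 1$, which is ensured by $m_0>n$.
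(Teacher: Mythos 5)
Your proof is correct, and it reaches the equality by a genuinely different mechanism for the inclusion $(inP)^{l}\subseteq I(l\mathbf{\alpha })$. The paper works directly at the level of the $l$th power: it takes the explicit minimal generating set $\Omega $ of $(inP)^{l}$ from $\left( \ref{Omega}\right) $ and checks, generator by generator, that each monomial $x_{i_{1}}\cdots x_{i_{2l-c}}x_{t}x_{n}^{\lambda _{c}-1}$ is a multiple of a generator $x_{i_{1}}\cdots x_{i_{2l-e}}x_{n}^{\lambda _{e}}$ of $I(l\mathbf{\alpha })$ (via the ceiling computation $\lambda _{c-1}\leq \lambda _{c}-1$), which requires the combinatorial description of $\Omega $ for every $l$. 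You instead verify a single first-power containment $inP\subseteq I(\mathbf{\alpha })$ using the Newton polyhedron criterion $\sum c_{i}/a_{i}\geq 1$ on the handful of generators of $inP$ (the only nontrivial check being $\tfrac{1}{2}+\tfrac{q}{q+1}\geq 1$, i.e. $q\geq 1$), and then let the normality statement of Theorem $(\ref{NormailtyThm})$ and Corollary $(\ref{Gens})$ do the work of propagating to all powers via $I(\mathbf{\alpha })^{l}=I(l\mathbf{\alpha })$. The reverse inclusion is handled identically in both arguments: the pure powers $x_{i}^{2l}$ and $x_{n}^{l(q+1)}$ lie in $(inP)^{l}$, so $I(l\mathbf{\alpha })\subseteq \overline{(inP)^{l}}$. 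Your route buys independence from the explicit set $\Omega $ and makes transparent exactly where $q\geq 1$ enters; the paper's route buys an explicit matching of generating sets that it reuses immediately afterward in the proof of Theorem $(\ref{IntCl-MainTheorem})$. Both are complete.
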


\begin{proof}
By Corollary $\left( \ref{Gens}\right) $ $I(l\mathbf{\alpha })$ is generated
by the elements of the set $\{x_{i_{1}}\cdots x_{i_{2l-e}}x_{n}^{\lambda
_{e}}\mid e=0,1,2,\ldots ,2l$; $1\leq i_{1}\leq i_{2}\leq \cdots \leq
i_{2l-e}\leq n-1\}$ where $\lambda _{e}=\left\lceil e\dfrac{q+1}{2}%
\right\rceil $. Let $m=x_{i_{1}}x_{i_{2}}\cdots
x_{i_{2l-c}}x_{t}x_{n}^{\lambda _{c}-1}\in \Omega $ (where $\Omega $ is as
defined in $\left( \ref{Omega}\right) $)\ with $c$ even and let $%
i_{2l-c+1}=t $. If $e=c-1$, then $2l-c+1=2l-e$ and hence $m$ is a multiple
of $x_{i_{1}}\cdots x_{i_{2l-c}}x_{i_{2l-e}}x_{n}^{\lambda _{e}}$ as $%
\lambda _{e}=\left\lceil \left( c-1\right) \dfrac{q+1}{2}\right\rceil =%
\dfrac{c}{2}(q+1)-\left\lfloor \dfrac{q+1}{2}\right\rfloor \leq \dfrac{c}{2}%
(q+1)-1=\lambda _{c}-1$. Therefore, $\overline{(inP)^{l}}\subseteq I(l%
\mathbf{\alpha })$.

\ \ 

On the other hand, note that $x_{n}^{l(q+1)}\in (inP)^{l}$ and $%
x_{i}^{2l}\in (inP)^{l}$ for $i=1,\ldots ,n-1$. Thus $I(2l,\ldots
,2l,(q+1)l)\subseteq \overline{(inP)^{l}}$, that is, $I(l\mathbf{\alpha }%
)\subseteq \overline{(inP)^{l}}$.
\end{proof}

\begin{remark}
\label{set-H}Let $\mathbf{\alpha }=(2,\ldots ,2,q+1)\in \mathbb{Z}_{+}^{n}$.
By Lemma $\left( \ref{I_k=I_ka}\right) $ and Corollary $\left( \ref{Gens}%
\right) $ $\overline{(inP)^{l}}\ $is generated by the elements of the set 
\begin{equation*}
H=\{x_{i_{1}}\cdots x_{i_{2l-e}}x_{n}^{\lambda _{e}}\mid e=0,1,2,\ldots ,2l%
\text{; }1\leq i_{1}\leq i_{2}\leq \cdots \leq i_{2l-e}\leq n-1\}\text{ }
\end{equation*}%
where $\lambda _{e}=\left\lceil e\dfrac{q+1}{2}\right\rceil $. \ \ \ \ \ 
\end{remark}

The following three figures give an interesting visualization of how the
values of $q$ and $r$ decide the integral closedness of $\left( inP\right)
^{l}$. Figure (2) gives a representation of $\left( inP\right) ^{3}$ where $%
P\subseteq F[x_{0},x_{1},x_{2},x_{3}]$\ is the defining ideal that
corresponds to the sequence $m_{0}=19,m_{1}=20,m_{2}=21$, and $m_{3}=22$.
Then $q=6$ and $r=1$. By Proposition $\left( \ref{GrobBasis}\right) $ the
set $\{\alpha _{i,j}=x_{i}x_{j}-x_{i-1}x_{j+1}\mid 1\leq i\leq j\leq 2\}\cup
\{\psi _{j}=x_{j}x_{3}^{6}-x_{0}^{7}x_{j-1}\mid 1\leq j\leq 3\}$\ forms a
Groebner basis for $P$. Thus, $inP$ is generated by $%
\{x_{1}^{2},x_{2}^{2},x_{1}x_{2},x_{1}x_{3}^{6},x_{2}x_{3}^{6},x_{3}^{7}\}$.
By Theorem $\left( \ref{IntCl-MainTheorem}\right) $ the ideal $(inP)^{l}$ is
not integrally closed for some positive integer $l$. The ideal $\left(
inP\right) ^{3}$ is minimally generated by $\Omega =\{x_{1}^{a}x_{2}^{b}\mid
a+b=6\}\cup \{x_{1}^{a}x_{2}^{b}x_{3}^{6}\mid a+b=5\}\cup
\{x_{1}^{a}x_{2}^{b}x_{3}^{7}\mid a+b=4\}\cup
\{x_{1}^{a}x_{2}^{b}x_{3}^{13}\mid a+b=3\}\cup
\{x_{1}^{a}x_{2}^{b}x_{3}^{14}\mid a+b=2\}\cup
\{x_{1}^{a}x_{2}^{b}x_{3}^{20}\mid a+b=1\}\cup \{x_{3}^{21}\}$. Those
monomials are represented by grey (either a disk or a circle) in Figure (2).
The monomials that are represented by black (disk or a circle) are in $%
\overline{(inP)^{3}}$ but not in $\left( inP\right) ^{3}$, hence $\left(
inP\right) ^{3}$ is not integrally closed. The set $H$ of Remark $\left( \ref%
{set-H}\right) $ is $\{x_{1}^{a}x_{2}^{b}x_{3}^{\lambda _{6-(a+b)}}\mid
a+b=0,1,2,3,4,5,6$ \textit{and} $\lambda _{a}=\left\lceil 7\frac{a}{2}%
\right\rceil $ $\}$ and it is represented by circles (grey or black). It is
clear that this set minimally generates $\overline{(inP)^{3}}$.%
\begin{equation*}
\begin{tabular}{l}
$\FRAME{itbpF}{4.7937in}{2.5105in}{-0.0104in}{}{}{Figure}{\special{language
"Scientific Word";type "GRAPHIC";maintain-aspect-ratio TRUE;display
"USEDEF";valid_file "T";width 4.7937in;height 2.5105in;depth
-0.0104in;original-width 4.7392in;original-height 2.469in;cropleft
"0";croptop "1";cropright "1";cropbottom "0";tempfilename
'L8731X03.wmf';tempfile-properties "XPR";}}$ \\ 
$\text{\textbf{Figure 2}. The monomials that are represented }$ \\ 
$\text{by black (circle or a disk) appear since }q>2\text{.}$%
\end{tabular}%
\end{equation*}

\ \ \ Figure (3) gives a representation of $\left( inP\right) ^{3}$ where $%
P\subseteq F[x_{0},x_{1},x_{2},x_{3}]$\ is the defining ideal that
corresponds to the sequence $m_{0}=8,m_{1}=11,m_{2}=14$, and $m_{3}=17$.
Then $q=2$ and $r=2$. By Proposition $\left( \ref{GrobBasis}\right) $ the
set $\{\alpha _{i,j}=x_{i}x_{j}-x_{i-1}x_{j+1}\mid 1\leq i\leq j\leq 2\}\cup
\{\psi _{j}=x_{j}x_{3}^{2}-x_{0}^{3}x_{j-1}\mid 2\leq j\leq 3\}$\ forms a
Groebner basis for $P$. Thus, $inP$ is generated by $%
\{x_{1}^{2},x_{2}^{2},x_{1}x_{2},x_{2}x_{3}^{2},x_{3}^{3}\}$. By Theorem $%
\left( \ref{IntCl-MainTheorem}\right) $ the ideal $(inP)^{l}$ is not
integrally closed for some positive integer $l$. The ideal $\left(
inP\right) ^{3}$ is minimally generated by $\Omega =\{x_{1}^{a}x_{2}^{b}\mid
a+b=6\}\cup \{x_{1}^{a}x_{2}^{b}x_{3}^{2}\mid a+b=5;b\geq 1\}\cup
\{x_{1}^{a}x_{2}^{b}x_{3}^{3}\mid a+b=4\}\cup
\{x_{1}^{a}x_{2}^{b}x_{3}^{5}\mid a+b=3;b\geq 1\}\cup
\{x_{1}^{a}x_{2}^{b}x_{3}^{6}\mid a+b=2\}\cup
\{x_{1}^{a}x_{2}^{b}x_{3}^{8}\mid a+b=1;b\geq 1\}\cup \{x_{3}^{9}\}$. Those
monomials are represented by grey (disk or a circle) in Figure (3). The
monomials that are represented by black disks are in $\overline{(inP)^{3}}$
but not in $\left( inP\right) ^{3}$, hence $\left( inP\right) ^{3}$ is not
integrally closed. The set $H$ of Remark $\left( \ref{set-H}\right) $ is $%
\{x_{1}^{a}x_{2}^{b}x_{3}^{\lambda _{6-(a+b)}}\mid a+b=0,1,2,3,4,5,6$ 
\textit{and} $\lambda _{a}=\left\lceil 3\frac{a}{2}\right\rceil $ $\}$ and
it is represented by circles and disks (grey or black).%
\begin{equation*}
\begin{tabular}{l}
\FRAME{itbpF}{3.704in}{2.8444in}{0in}{}{}{Figure}{\special{language
"Scientific Word";type "GRAPHIC";maintain-aspect-ratio TRUE;display
"USEDEF";valid_file "T";width 3.704in;height 2.8444in;depth
0in;original-width 3.6564in;original-height 2.802in;cropleft "0";croptop
"1";cropright "1";cropbottom "0";tempfilename
'L8731Y04.wmf';tempfile-properties "XPR";}} \\ 
\textbf{Figure 3}. $\text{The monomials that are represented }$ \\ 
$\text{by black disks appear since }r>1\text{.}$%
\end{tabular}%
\end{equation*}%
\ \ \ \ \ \ \ \ 

Figure (4) gives a representation of $\left( inP\right) ^{3}$ where $%
P\subseteq F[x_{0},x_{1},x_{2},x_{3}]$\ is the%
\begin{equation*}
\begin{tabular}{l}
\FRAME{itbpF}{3.6521in}{2.6775in}{0in}{}{}{Figure}{\special{language
"Scientific Word";type "GRAPHIC";maintain-aspect-ratio TRUE;display
"USEDEF";valid_file "T";width 3.6521in;height 2.6775in;depth
0in;original-width 3.6045in;original-height 2.6351in;cropleft "0";croptop
"1";cropright "1";cropbottom "0";tempfilename
'L8731Z05.wmf';tempfile-properties "XPR";}} \\ 
\textbf{Figure 4}. No black points appear since $q\leq 2$ and $r=1$.%
\end{tabular}%
\end{equation*}%
defining ideal that corresponds to the sequence $m_{0}=7,m_{1}=9,m_{2}=11$,
and $m_{3}=13$. Then $q=2$ and $r=1$. By Proposition $\left( \ref{GrobBasis}%
\right) $ the set $\{\alpha _{i,j}=x_{i}x_{j}-x_{i-1}x_{j+1}\mid 1\leq i\leq
j\leq 2\}\cup \{\psi _{j}=x_{j}x_{3}^{2}-x_{0}^{3}x_{j-1}\mid 1\leq j\leq
3\} $\ forms a Groebner basis for $P$. Thus, $inP$ is generated by $%
\{x_{1}^{2},x_{2}^{2},x_{1}x_{2},x_{1}x_{3}^{2},x_{2}x_{3}^{2},x_{3}^{3}\}$.
By Theorem $\left( \ref{IntCl-MainTheorem}\right) $ the ideal $(inP)^{l}$ is
integrally closed for any positive integer $l$. The ideal $\left( inP\right)
^{3}$ is minimally generated by $\Omega =\{x_{1}^{a}x_{2}^{b}x_{3}^{\lambda
_{6-(a+b)}}\mid a+b=0,1,2,3,4,5,6$;\textit{\ }$\lambda _{a}=\left\lceil 3%
\frac{a}{2}\right\rceil \}=H$.

\ 

We are know ready to prove the last main theorem of this paper.

\ \ \ \ \ \ \ \ \ \ \ 

\begin{proof}
(\textbf{Proof of Theorem~\ref{IntCl-MainTheorem}})\ Let $\mathbf{\alpha }%
=(2,\ldots ,2,q+1)\in \mathbb{Z}_{+}^{n}$. By Lemma $\left( \ref{I_k=I_ka}%
\right) $ and Corollary $\left( \ref{Gens}\right) $ $\overline{(inP)^{l}}\ $%
is generated by the elements of the set 
\begin{equation*}
H=\{x_{i_{1}}\cdots x_{i_{2l-e}}x_{n}^{\lambda _{e}}\mid e=0,1,2,\ldots ,2l%
\text{; }1\leq i_{1}\leq i_{2}\leq \cdots \leq i_{2l-e}\leq n-1\}\text{ }
\end{equation*}%
where $\lambda _{e}=\left\lceil e\dfrac{q+1}{2}\right\rceil $. By $\left( %
\ref{Omega}\right) $ the ideal $(inP)^{l}$ is minimally generated by the set 
$\Omega =A\cup B$ where 
\begin{eqnarray*}
A &=&\{x_{i_{1}}x_{i_{2}}\cdots x_{i_{2l-c}}x_{n}^{\lambda _{c}}\mid
c=0,2,4,...,2l\text{; }1\leq i_{j}\leq n-1\}\text{,} \\
B &=&\{x_{i_{1}}x_{i_{2}}\cdots x_{i_{2l-c}}x_{t}x_{n}^{\lambda _{c}-1}\mid
c=2,4,6,...,2l\text{; }1\leq i_{j}\leq n-1\text{; }r\leq t\leq n-1\text{ }\}%
\text{.}
\end{eqnarray*}%
Thus $(inP)^{l}=\overline{(inP)^{l}}$ if and only if $H=$ $\Omega $.
Therefore, the theorem is proved by showing $H\subseteq $ $\Omega $ if and
only if $q\leq 2$ and $r=1$.

\ \ \ \ 

Assume $q\leq 2$ and $r=1$. If we let $x_{2l-c+1}=x_{t}$, then $\Omega $ can
be redefined as%
\begin{equation*}
\Omega =\{x_{i_{1}}x_{i_{2}}\cdots x_{i_{2l-c+b}}x_{n}^{\lambda _{c}-b}\mid
1\leq i_{j}\leq n-1\text{; }c=0,2,4,...,2l\text{ and }b=0,1\}
\end{equation*}%
with $b=0$ if $c=0$. Also since $q\leq 2$, then $\lambda _{c-1}=\left\lceil
\left( c-1\right) \dfrac{q+1}{2}\right\rceil =\dfrac{c}{2}(q+1)-\left\lfloor 
\dfrac{q+1}{2}\right\rfloor =\dfrac{c}{2}(q+1)-1=\lambda _{c}-1$ for every
even integer $c$. This implies%
\begin{equation*}
\Omega =\{x_{i_{1}}x_{i_{2}}\cdots x_{i_{2l-c}}x_{n}^{\lambda _{c}}\mid
1\leq i_{j}\leq n-1;c=0,1,2,3,4,...,2l\}=H\text{.}
\end{equation*}

To prove the sufficient condition consider that $\sigma =x_{i_{1}}\cdots
x_{i_{2l-1}}x_{n}^{\lambda _{1}}\in H$ for $1\leq i_{1}\leq i_{2}\leq \cdots
\leq i_{2l-1}\leq n-1$. Fix an integer $j$ such that $1\leq j\leq n-1$ and
let $i_{1}=i_{2}=\cdots =i_{2l-1}=j$. Then $\sigma
=x_{j}^{2l-1}x_{n}^{\left\lceil \frac{q+1}{2}\right\rceil }$. We show $%
\sigma \notin \Omega $ whenever $q>2$ or $r>1$. If $r>1$, then choose an
integer $j$ with $1\leq j<r$.\ As $\lambda _{c}>\left\lceil \frac{q+1}{2}%
\right\rceil $ for any nonzero even integer $c$, then $\sigma \notin A$.
Clearly, $\sigma \notin B$ as $j<r$. Hence, $\sigma \notin \Omega $. If $q>2$%
, then $\lambda _{1}=\left\lceil \dfrac{q+1}{2}\right\rceil <q\leq \dfrac{c}{%
2}(q+1)-1=\lambda _{c}-1$ for any nonzero even integer $c$. This shows $%
\sigma \notin \Omega $.
\end{proof}

\ 
\setstretch{1.15}
\ \ \

\bigskip\ \ \ \ \ 

Ibrahim Al-Ayyoub, assistant professor.

Department of Mathematics and Statistics

Jordan University of Science and Technology

P O Box 3030, Irbid 22110, JORDAN

Email address: iayyoub@just.edu.jo

\end{document}